\newcommand{\InterListHigh}[1]{I^{#1}}
\newcommand{\ParInterListHigh}[1]{\mathcal{I}^{#1}}
\newcommand{\InterListLow}[1]{I^{#1}} 
\def\listspace{\vspace{0.0in}}
\def\listsep{}
\def\OTree{\mathcal{T}}
\def\PLevel{l_p}
\def\PC{\mathcal{P}}
\def\PWidth{2\sqrt{K}}
\newcommand{\OutDirEqPts}[1]{\BraceOf{y_k^{#1, o, \ell}}}
\newcommand{\OutDirEqChrgs}[1]{\BraceOf{f_k^{#1, o, \ell}}}
\newcommand{\OutDirCheckPts}[1]{\BraceOf{x_k^{#1, o, \ell}}}
\newcommand{\OutDirCheckPots}[1]{\BraceOf{u_k^{#1, o, \ell}}}
\newcommand{\InDirEqPts}[1]{\BraceOf{y_k^{#1, i, \ell}}}
\newcommand{\InDirEqChrgs}[1]{\BraceOf{f_k^{#1, i, \ell}}}
\newcommand{\InDirCheckPts}[1]{\BraceOf{x_k^{#1, i, \ell}}}
\newcommand{\InDirCheckPots}[1]{\BraceOf{u_k^{#1, i, \ell}}}
\newcommand{\OutEqPts}[1]{\BraceOf{y_k^{#1, o}}}
\newcommand{\OutEqChrgs}[1]{\BraceOf{f_k^{#1, o}}}
\newcommand{\OutCheckPts}[1]{\BraceOf{x_k^{#1, o}}}
\newcommand{\OutCheckPots}[1]{\BraceOf{u_k^{#1, o}}}
\newcommand{\InEqPts}[1]{\BraceOf{y_k^{#1, i}}}
\newcommand{\InEqChrgs}[1]{\BraceOf{f_k^{#1, i}}}
\newcommand{\InCheckPts}[1]{\BraceOf{x_k^{#1, i}}}
\newcommand{\InCheckPots}[1]{\BraceOf{u_k^{#1, i}}}
\title{A parallel directional fast multipole method}
\author{
Austin R. Benson\thanks{ICME, Stanford University, 475 Via Ortega, Stanford, CA 94305 (\texttt{arbenson@stanford.edu})}
\and Jack Poulson\thanks{Department of Mathematics, Stanford University, 450 Serra Mall, Stanford, CA 94305 (\texttt{poulson@stanford.edu})}
\and Kenneth Tran\thanks{Microsoft Corporation, One Microsoft Way, Redmond, WA, 98052 (\texttt{ktran@microsoft.com})}
\and \newline Bj\"{o}rn Engquist\thanks{Department of Mathematics and ICES, University of Texas at Austin, 1 University Station
C1200, Austin, TX, 78712 (\texttt{engquist@ices.utexas.edu})}
\and Lexing Ying\thanks{Department of Mathematics and ICME, Stanford University, 450 Serra Mall, Stanford, CA 94305 (\texttt{lexing@math.stanford.edu})}
}
\begin{document}

\maketitle

\begin{abstract}
This paper introduces a parallel directional fast multipole method
(FMM) for solving $N$-body problems with highly oscillatory kernels,
with a focus on the Helmholtz kernel in three dimensions.  This class
of oscillatory kernels requires a more restrictive low-rank criterion
than that of the low-frequency regime, and thus effective
parallelizations must adapt to the modified data dependencies.  We
propose a simple partition at a fixed level of the octree and show
that, if the partitions are properly balanced between $p$ processes,
the overall runtime is essentially $\bigoh{N \log N/p + p}$.  By the
structure of the low-rank criterion, we are able to avoid
communication at the top of the octree.  We demonstrate the
effectiveness of our parallelization on several challenging models.
\end{abstract}

\begin{keywords} 
parallel, fast multipole methods, $N$-body problems, scattering problems, Helmholtz equation, oscillatory kernels, directional, multilevel
\end{keywords}

\begin{AMS}
65Y05, 65Y20, 78A45
\end{AMS}

\pagestyle{myheadings}
\thispagestyle{plain}

\section{Introduction}

This paper is concerned with a parallel algorithm for the rapid solution of a class of $N$-body problems.
Let $\BraceOf{f_i, 1 \le i \le N}$ be a set of $N$ densities located at points $\BraceOf{p_i, 1 \le i \le N}$ in $\mbb{R}^{3}$, with $\vert p_i \vert \le K / 2$, where $\vert \cdot \vert$ is the Euclidean norm and $K$ is a fixed constant.
Our goal is to compute the potentials $\BraceOf{u_i, 1 \le i \le N}$ defined by
\begin{equation}\label{eq:potentials}
u_i = \ssum{j=1}{N}{G(p_i, p_j) \cdot f_j},
\end{equation}
where $G(x, y) = e^{2\pi \iota \vert x - y \vert} / \vert x - y \vert$
is the Green's function of the Helmholtz equation.  We have scaled the
problem such that the wave length equals one and thus high frequencies
correspond to problems with large computational domains, i.e., large
$K$.

The computation in Equation~\ref{eq:potentials} arises in
electromagnetic and acoustic scattering, where the usual partial
differential equations are transformed into boundary integral
equations (BIEs)~\cite{Colton:2013, Harris:2001, Tsuji:2011,
  Ying:2009}.  The discretized BIEs often result in large, dense
linear systems with $N=O(K^2)$ unknowns, for which iterative methods
are used.  Equation~\ref{eq:potentials} represents the matrix-vector
multiplication at the core of the iterative methods.

\subsection{Previous work}
Direct computation of Equation~\ref{eq:potentials} requires
$\bigoh{N^2}$ operations, which can be prohibitively expensive for
large values of $N$.  A number of methods reduce the computational
complexity without compromising accuracy.  The approaches include
FFT-based methods~\cite{Bleszynski:1996, Bojarski:1972, Bruno:2001},
local Fourier bases~\cite{Bradie:1993, Canning:1992}, and the high
frequency fast multipole method (HF-FMM)~\cite{Cecka:2013, Cheng:2006,
  Rokhlin:1992}.  We focus on a multilevel algorithm developed by
Engquist and Ying which exploits the numerically low-rank interaction
satisfying a \emph{directional parabolic separation
  condition}~\cite{Engquist:2007, Engquist:2009, Engquist:2010,
  Tran:2012}.

There has been much work on parallelizing the FMM for non-oscillatory kernels~\cite{Darve:2011, Greengard:1990, Rahimian:2010, Yokota:2012a}.
In particular, the kernel-independent fast multipole method (KI-FMM)~\cite{Ying:2004} is a central tool for our algorithm in the ``low-frequency regime", which we will discuss in subsection~\ref{subsec:sequential}.
Many efforts have extended KI-FMM to modern, heterogeneous architectures~\cite{Chandramowlishwaran:2012, Chandramowlishwaran:2010, Rahimian:2010, Ying:2003, Yokota:2012b}, and several authors have paralellized~\cite{Ergul:2008, Ergul:2009, Taboada:2010, Waltz:2007} the Multilevel Fast Multipole Algorithm~\cite{Chew:2001, Song:1995, Song:1997}, which is a variant of the HF-FMM.

\subsection{Contributions}
This paper provides the first parallel algorithm for computing Equation~\ref{eq:potentials} based on the directional approach.
Since the top of the octree imposes a well-known bottleneck in parallel FMM algorithms~\cite{Ying:2003},
a central advantage of the directional approach is that the interaction lists of boxes in the octree with width greater than or equal to $\PWidth$ are empty.
This alleviates the bottleneck, as no translations are needed at those levels of the octree.

Our parallel algorithm is based on a partition of the boxes at a fixed level in the octree amongst the processes (see subsection~\ref{subsec:partitioning}).
Due to the simple structure of the sequential algorithm, we can leverage the partition into a flexible parallel algorithm.
The parallel algorithm is described in \S~\ref{sec:parallelization}, and the results of several numerical experiments are in \S~\ref{sec:numerical}.
As will be seen, the algorithm exhibits good strong scaling up to 1024 cores 
for several challenging models.

\section{Preliminaries}

In this section, we review the directional low-rank property of the Helmholtz kernel
and the sequential directional algorithm for computing Equation~\ref{eq:potentials}~\cite{Engquist:2007}.
Proposition~\ref{prop:near_field} in subsection~\ref{subsec:definitions} shows why 
the algorithm can avoid computation at the top levels of the octree.
We will use this to develop the parallel algorithm in \S~\ref{sec:parallelization}.

\subsection{Directional low-rank property}
\label{subsec:dir_lowrank}

\begin{figure}
\centering
\vspace{-0.5cm} 
\includegraphics[scale=0.5]{./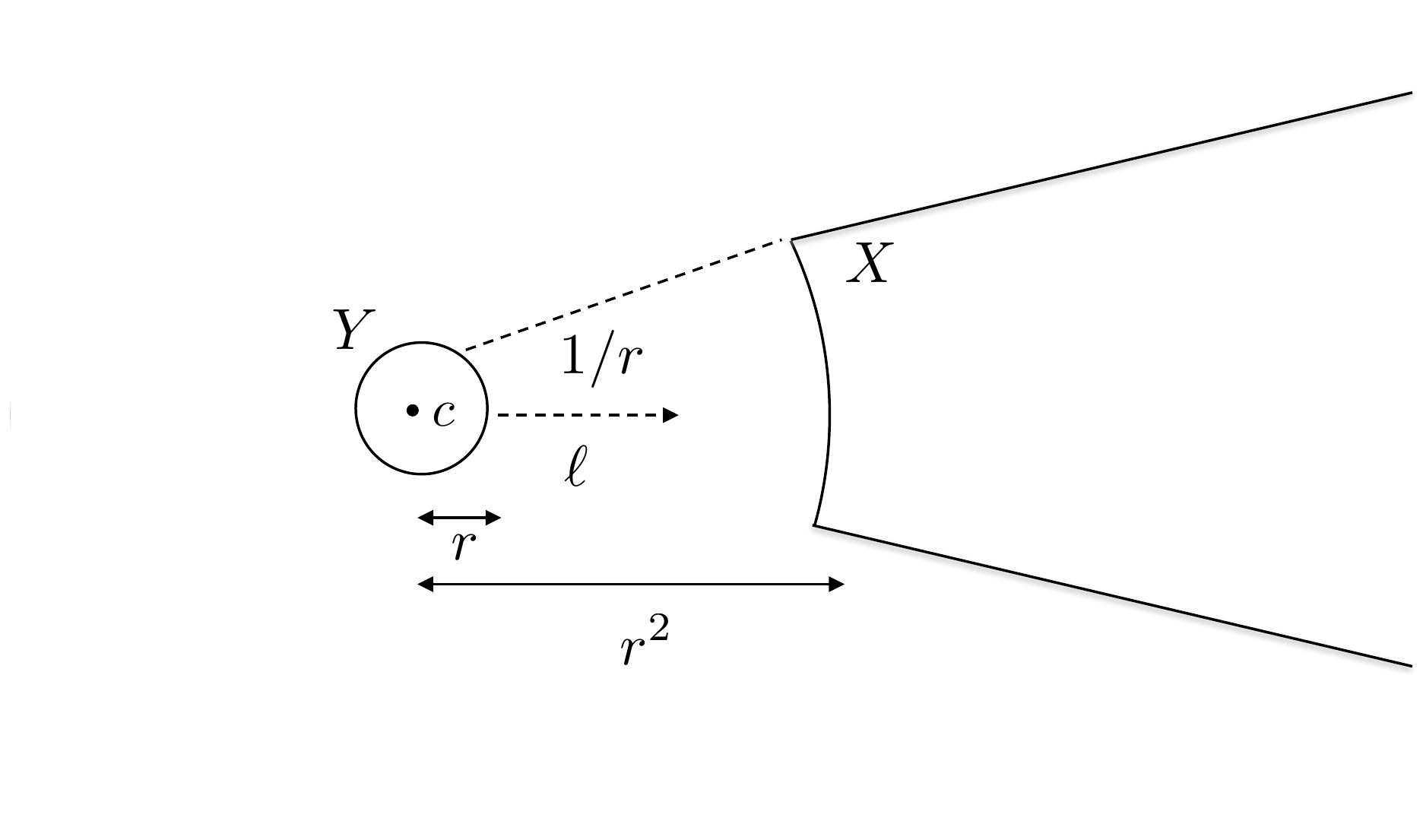}
\vspace{-0.5cm} 
\caption{Sets $Y$ and $X$ that satisfy the directional parabolic separation condition.}
\label{fig:parabolic}
\end{figure}

Let $Y$ be a ball of radius $r \ge \sqrt{3}$ centered at a point $c$ and define the wedge $X = \BraceOf{x : \theta(x, \ell) \le 1/r, \vert x - c \vert \ge r^2}$, where $\ell$ is a unit vector and $\theta(x, \ell)$ is the angle between the vectors $x$ and $\ell$.
$Y$ and $X$ are said to satisfy the \emph{directional parabolic separation condition}, which is illustrated in Fig.~\ref{fig:parabolic}.

Suppose we have a set of charges $\BraceOf{f_i}$ located at points
$\BraceOf{y_i} \subseteq Y$.  Given a threhold $\epsilon$, there exist
two sets of points $\BraceOf{b_q}_{1\le q \le r_\epsilon}$ and
$\BraceOf{a_p}_{1\le p \le r_\epsilon}$ and a set of constants
$\BraceOf{d_{pq}}_{1\le p,q\le r_\epsilon}$ such that, for $x \in X$,
\begin{equation}
\label{eq:equivs1}
\left\vert \ssum{i}{}{G(x, y_i)f_i} -  \ssum{q}{}{G(x, b_q)\left(\ssum{p}{}{d_{qp}\ssum{i}{}{G(a_p, y_i)f_i}}\right)} \right\vert = \bigoh{\epsilon}
\end{equation}
where $r_\epsilon$ is a constant that depends only on $\epsilon$.  A
random sampling method based on pivoted QR
factorization~\cite{Engquist:2010} can be used to determine this
approximation. We can, of course, reverse the roles of the ball and
the wedge.  In Equation~\ref{eq:equivs1}, the quantities
$\BraceOf{\sum_{p}{d_{qp}\sum_{i}{G(a_p, y_i)f_i}}}$, $\BraceOf{b_q}$,
$\BraceOf{\sum_{i}{G(b_q, x_i)f_i}}$, and $\BraceOf{a_p}$ are called
the \emph{directional outgoing} equivalent charges, equivalent points,
check potentials, and check points of $Y$ in direction $\ell$.  When
the roles of $X$ and $Y$ are reversed,
$\BraceOf{\sum_{q}{d_{qp}\sum_{i}{G(b_q, x_i)f_i}}}$, $\BraceOf{a_p}$,
$\BraceOf{\sum_{i}{G(b_q, x_i)f_i}}$, $\BraceOf{b_q}$ are called the
\emph{directional incoming} equivalent charges, equivalent points,
check potentials, and check points of $Y$ in direction $\ell$.  The
constants $d_{pq}$ form a matrix $D=(d_{p,q})$ that we call a
\emph{translation matrix}.

\subsection{Definitions}
\label{subsec:definitions}

Without loss of generality, we assume that the domain width is $K =
2^{2L}$ for a positive integer $L$ (recall that $\vert p_i \vert \le K
/ 2$ in Equation~\ref{eq:potentials}).  The central data structure in
the sequential algorithm is an octree.  The parallel algorithm,
described in \S~\ref{sec:parallelization}, uses the same octree.  For
the remainder of the paper, let $B$ denote a box in the octree and
$w(B)$ the width of $B$.  We say that a box $B$ is in the
high-frequency regime if $w(B) \ge 1$ and in the low-frequency regime
if $w(B) < 1$.  As discussed in~\cite{Engquist:2007}, the octree is
non-adaptive in the non-empty boxes of the high-frequency regime and
adaptive in the low-frequency regime.  In other words, a box $B$ is
partitioned into children boxes if $B$ is non-empty and in the
high-frequency regime or if the number of points in $B$ is greater
than a fixed constant and in the low-frequency regime.

For each box $B$ in the high-frequency regime and each direction $\ell$, we use the following notation for the relevant data:

\listspace\begin{itemize}
\listsep
\item $\OutDirEqPts{B}$, $\OutDirEqChrgs{B}$, $\OutDirCheckPts{B}$, and $\OutDirCheckPots{B}$ are the \emph{outgoing directional} equivalent points, equivalent charges, check points, and check potentials, and

\item  $\InDirEqPts{B}$, $\InDirEqChrgs{B}$, $\InDirCheckPts{B}$, and $\InDirCheckPots{B}$ are the \emph{incoming directional} equivalent points, equivalent charges, check points, and check potentials.
\end{itemize}\listspace

In the low-frequency regime, the algorithm uses the kernel-independent FMM~\cite{Ying:2004}.
For each box $B$ in the low-frequency regime, we use the following notation for the relevant data:

\listspace\begin{itemize}
\listsep
\item $\OutEqPts{B}$, $\OutEqChrgs{B}$, $\OutCheckPts{B}$, and $\OutCheckPots{B}$ are the \emph{outgoing} equivalent points, equivalent charges, check points, and check potentials, and

\item $\InEqPts{B}$, $\InEqChrgs{B}$, $\InCheckPts{B}$, and $\InCheckPots{B}$ are the \emph{incoming} equivalent points, equivalent charges, check points, and check potentials.
\end{itemize}\listspace

Due to the translational and rotational invariance of the relevant kernels, the 
(directional) outgoing and incoming equivalent and check points can be 
efficiently precomputed since they only depend upon the problem size, $K$, and 
the desired accuracy.\footnote{Our current hierarchical wedge decomposition does not respect rotational invariance, and so, in some cases, this precomputation can be asymptotically more expensive than the subsequent directional FMM.}
We discuss the number of equivalent points and check points in subsection~\ref{subsec:precomp}.

Let $B$ be a box with center $c$ and $w(B) \ge 1$.
We define the near field of a box $B$ in the high-frequency regime, $N^B$, as the union of boxes $A$ that contain a point in a ball centered at $c$ with radius $\sqrt{3}w / 2$.
Such boxes are too close to satisfy the directional parabolic condition described in subsection~\ref{subsec:dir_lowrank}.
The far field of a box $B$, $F^B$, is simply the complement of $N^B$.
The interaction list of $B$, $\InterListHigh{B}$, is the set of boxes $N^P \backslash N^B$, where $P$ is the parent box of $B$.

To exploit the low-rank structure of boxes separated by the directional parabolic condition, we need to partition $F^B$ into wedges with a spanning angle of $\bigoh{1 / w}$.
To form the directional wedges, we use the construction described in~\cite{Engquist:2007}.
The construction has a hierarchical property: for any directional index $\ell$ of a box $B$, there exists a directional index $\ell^{\prime}$ for boxes of width $w(B) / 2$ such that the $\ell$-th wedge of $B$ is contained in the $\ell^{\prime}$-th wedge of $B$'s children.
Figure~\ref{fig:hierarchical_wedges} illustrates this idea.
In order to avoid boxes on the boundary of two wedges, the wedges are overlapped.
Boxes that would be on the boundary of two wedges are assigned to the wedge that comes first in an arbitrary lexicographic ordering.

\begin{figure}[tbp]
\centering
\vspace{-2.5cm} 
\includegraphics[scale=0.4]{./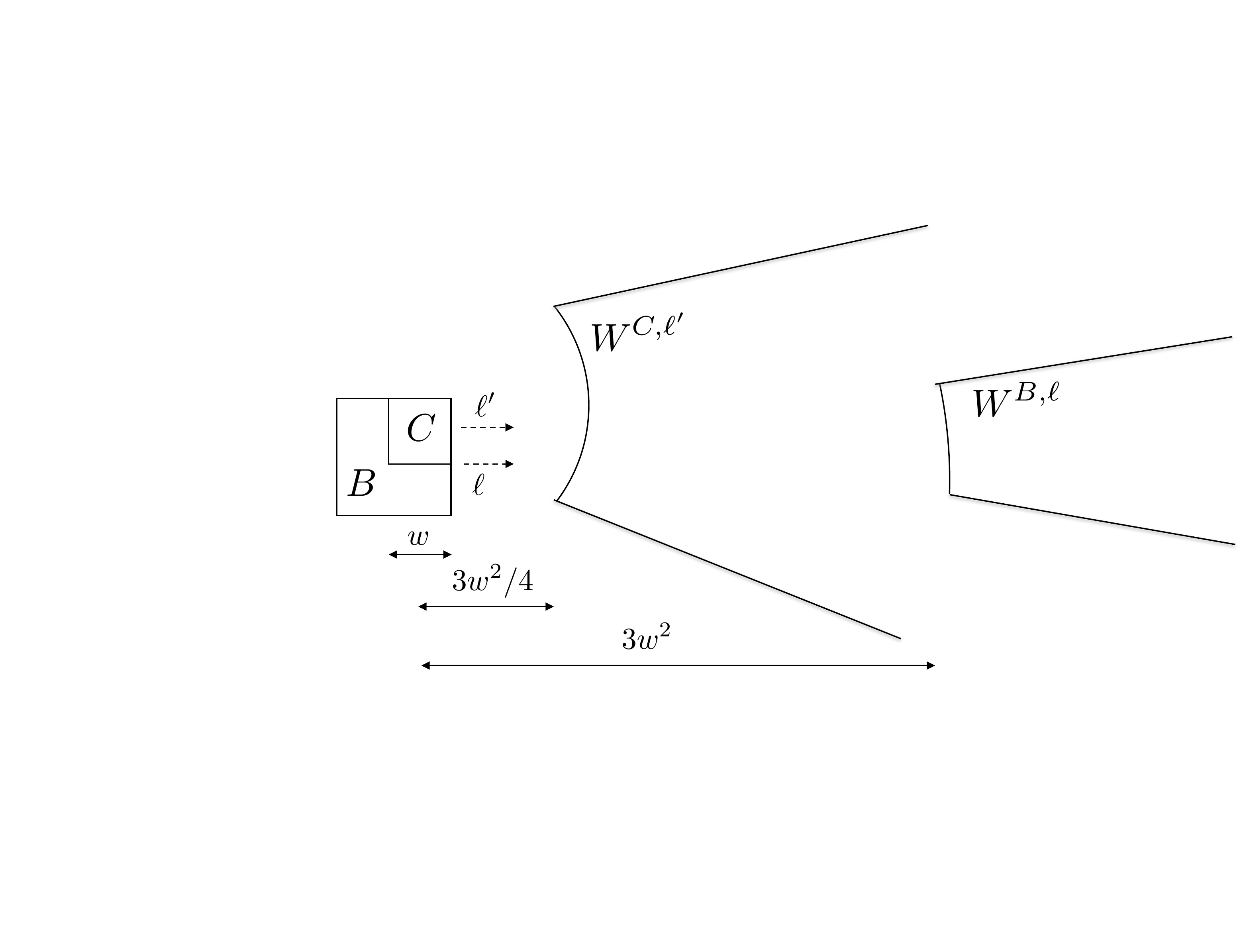}
\vspace{-2.5cm} 
\caption{
A two-dimensional slice of the three-dimensional hierarchical wedges.
$W^{B, \ell}$ is the wedge for box $B$ for the directional index $\ell$.
By the hierarchical construction, we can find an index direction $\ell^{\prime}$ such that
for any child box $C$ of $B$, $W^{B, \ell}$ is contained in $W^{C, \ell^{\prime}}$,
the $\ell^{\prime}$-th wedge of $C$.
Since $C$ has a smaller width ($w$) than $B$ ($2w$), the wedge for $C$ is closer and wider than the wedge for $B$.
This is a consequence of the directional parabolic separation condition described in subsection~\ref{subsec:dir_lowrank}.
}
\label{fig:hierarchical_wedges}
\end{figure}


\subsection{Sequential algorithm}
\label{subsec:sequential}

In the high-frequency regime, the algorithm uses directional M2M, M2L, and L2L translation operators~\cite{Engquist:2007}, similar to the standard FMM~\cite{Cheng:1999}.
The high-frequency directional FMM operators are as follows:

\listspace\begin{itemize}
\listsep
\item HF-M2M constructs the outgoing directional equivalent charges of $B$ from the outgoing equivalent charges of $B$'s children, 
\item HF-L2L constructs the incoming directional check potentials of $B$'s children from the incoming directional check potentials of $B$, and
\item HF-M2L adds to the incoming directional check potentials of $B$ due to outgoing directional charges from each box $A \in \InterListHigh{B}$.
\end{itemize}\listspace

The following proposition shows that at the top levels of the octree, the interaction lists are empty:
\begin{proposition}
\label{prop:near_field}
Let $B$ be a box with width $w$.
If $w \ge 2\sqrt{K}$, then $N^B$ contains all nonempty boxes in the octree.
\end{proposition}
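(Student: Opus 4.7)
The plan is to show that when $w \ge 2\sqrt{K}$, the near-field ball around $B$'s center already engulfs every source point in the problem, so every nonempty octree box must intersect it.

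First I would record two simple geometric facts. Since $\vert p_j \vert \le K/2$, all source points lie in the Euclidean ball of radius $K/2$ about the origin. Any box $B$ in the octree is contained in the root cube $[-K/2,K/2]^3$, so its center $c$ satisfies $\vert c \vert \le \sqrt{3}\,K/2$. The triangle inequality then gives, for every source $p_j$,
\[
\vert c - p_j \vert \;\le\; \vert c \vert + \vert p_j \vert \;\le\; \frac{(\sqrt{3}+1)\,K}{2}.
\]

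Next I would compare this against the near-field radius attached to $B$. The directional parabolic separation condition of subsection~\ref{subsec:dir_lowrank}, applied to the bounding ball of $B$ of radius $r=\sqrt{3}w/2$, places any point $x$ with $\vert x - c\vert \ge r^2$ into the far field of $B$; thus $N^B$ contains every octree box that meets the ball of radius $r^2 = 3w^2/4$ about $c$. The hypothesis $w \ge 2\sqrt{K}$ gives $r^2 \ge 3K > (\sqrt{3}+1)K/2$, so every source $p_j$ lies inside this near-field ball. Since every nonempty box in the octree contains at least one such $p_j$, every nonempty box intersects the ball and hence belongs to $N^B$.

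There is no real obstacle in the argument itself — it is a constant-checking exercise — but the conceptual point is worth emphasizing: because the parabolic condition scales as $r \mapsto r^2$, the near field grows quadratically in $w$, so already at $w \approx \sqrt{K}$ the entire computational domain is absorbed. This is exactly why interaction lists become empty near the top of the octree and why the parallel algorithm of \S~\ref{sec:parallelization} can avoid communication at those levels.
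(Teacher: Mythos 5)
Your proof is correct and is essentially the paper's argument: both reduce the claim to checking that the ball of radius $3w^2/4$ about the center $c$ (the near-field radius coming from the parabolic condition with $r=\sqrt{3}w/2$) contains every source point once $w \ge 2\sqrt{K}$, via a triangle inequality. The only cosmetic difference is that you bound $\vert c\vert \le \sqrt{3}K/2$ using the root cube, giving the clean comparison $3K > (\sqrt{3}+1)K/2$, whereas the paper bounds $\vert p_j - c\vert \le K + \sqrt{3}w/2$ through a point of $B$ and then solves the resulting quadratic inequality in $w$; both yield the stated sufficient condition.
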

\begin{proof}
Let $c$ be the center of $B$.
Then for any point $p_i \in B$, $|p_i - c| \le \sqrt{3}w/2$.
For any point $p_j$ in a box $A$, $|p_j - c| \le |p_j| + |p_i| + |p_i - c| \le K + \sqrt{3}w / 2$.
$N^B$ contains all boxes $A$ such that there is an $x \in A$ with $|x - c| \le 3w^2/4$.
Thus, $N^B$ will contain all non-empty boxes if
$3w^2/4 \ge \sqrt{3}w / 2 + K$.
This is satisfied for $w \ge \sqrt{4K/3 + 1/3} + \sqrt{1/3}$.
Since $K = 2^{2L}$, this simplifies to $w \ge 2\sqrt{K}$.
\end{proof}

Thus, if $w(B) \ge 2\sqrt{K}$, no HF-M2L translations are performed ($I^B = \emptyset$).
As a consequence, no HF-M2M or HF-M2L translations are needed for those boxes.
This property of the algorithm is crucial for the construction of the parallel algorithm that we present in \S~\ref{sec:parallelization}.

We denote the corresponding low-frequency translation operators as LF-M2M, LF-L2L, and LF-M2L.
We also denote the interaction list of a box in the low frequency regime by $\InterListLow{B}$, although the definition of $N^B$ is different~\cite{Engquist:2007}.
Algorithm~\ref{alg:DirFMMSequential} presents the sequential algorithm in terms of the high- and low-frequency translation operators.

\begin{algorithm}
\caption{Sequential directional FMM for Equation~\ref{eq:potentials}. \label{alg:DirFMMSequential}}
\KwData{points $\BraceOf{p_i}_{1 \le i \le N}$, densities $\BraceOf{f_i}_{1 \le i \le N}$, and octree $\OTree$}
\KwResult{potentials $\BraceOf{u_i}_{1 \le i \le N}$}
\BlankLine
\ForEach{box $B$ in \emph{postorder} traversal of $\OTree$ with $w(B) < 1$}{
  \ForEach{child box $C$ of $B$}{
  Transform $\{f_k^{C, o}\}$ to $\OutEqChrgs{B}$ via LF-M2M \\
  }
}
\ForEach{box $B$ in \emph{postorder} traversal of $\OTree$ with $1 \le w(B) \le \sqrt{K}$}{
    \ForEach{direction $\ell$ of $B$}{
	Let the $\ell^{\prime}$-th wedge of $B$'s children contain the $\ell$-th wedge of $B$ \\ 
	\ForEach{child box $C$ of $B$}{
	  Transform $\{f_k^{C, o, \ell^{\prime}}\}$ to $\OutDirEqChrgs{B}$ via HF-M2M
	  }
    }
}
\ForEach{box $B$ in \emph{preorder} traversal of $\OTree$ with $1 \le w(B) \le \sqrt{K}$}{
   \ForEach{direction $\ell$ of $B$}{
	    \ForEach{box $A \in \InterListHigh{B}$ in direction $\ell$}{
	       Let $\ell^{\prime}$ be the direction for which $B \in \InterListHigh{A}$ \\
	       Transform $\BraceOf{f_k^{A, o, \ell^{\prime}}}$ via HF-M2L and add result to $\InDirCheckPots{B}$
	    }
	    \ForEach{child box $C$ of $B$}{
	      Transform $\InDirCheckPots{B}$ to $\InDirCheckPots{C}$ via HF-L2L
	    }
    }
}
\ForEach{box $B$ in \emph{preorder} traversal of $\OTree$ with $w(B) < 1$}{
  \ForEach{$A \in \InterListLow{B}$}{
    Transform $\OutEqChrgs{A}$ via LF-M2L and add the result to $\InCheckPots{B}$
    }
    Transform $\InCheckPots{B}$ via LF-L2L \\
    \eIf{$B$ is a leaf box}{
       \ForEach{$p_i \in B$}{Add result to $u_i$}
    } {
      \ForEach{child box $C$ of $B$}{Add result to $\InCheckPots{C}$}
    }
}
\end{algorithm}

\section{Parallelization}
\label{sec:parallelization}

The overall structure of our proposed parallel algorithm is similar to the sequential algorithm.
A partition of the boxes at a fixed level in the octree governs the parallelism, and this partition is described in subsection~\ref{subsec:partitioning}.
We analyze the communication and computation complexities in subsections~\ref{subsec:comm_analysis}~and~\ref{subsec:comp_analysis}.
For the remainder of the paper, we use $p$ to denote the total number of processes.

\subsection{Partitioning the octree}
\label{subsec:partitioning}

\begin{figure}[t]
\centering
\includegraphics[scale=0.5]{./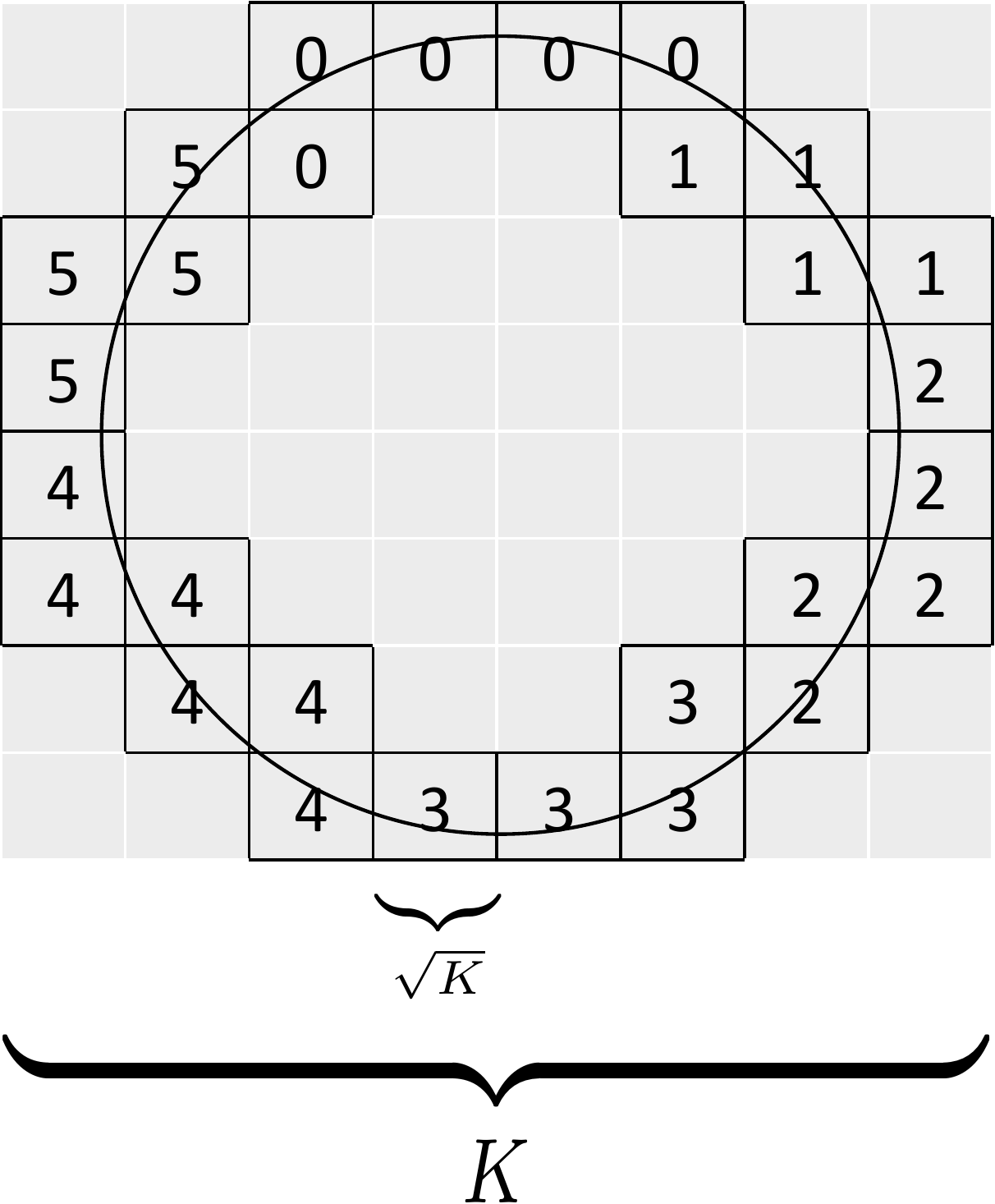}
\caption{
Two-dimensional slice of the octree at the partition level for a spherical geometry with
each box numbered with the process it is assigned to.
The width of the boxes at the partition level is $\sqrt{K}$,
which ensures that the interaction lists of all boxes above this partition level are empty.
We note that the number of boxes with surface points (black border) is much smaller than the total number of boxes.
}
\label{fig:cells}
\end{figure}

To parallelize the computation, we partition the octree at a fixed level.
Number the levels of the octree such that the $l$-th level consists of a $2^{l} \times 2^{l} \times 2^{l}$ grid of boxes of width $K \cdot 2^{-l}$.
By Proposition~\ref{prop:near_field}, there are no dependencies between boxes with width strictly greater than $\sqrt{K}$.
The partition assigns a process to every box $B$ with $w(B) = \sqrt{K}$, and we denote the partition by $\PC$.
In other words, for every non-empty box $B$ at level $L$ in the octree (recall that $K = 2^{2L}$), $\PC(B) \in \BraceOf{0, 1, \ldots, p-1}$.
We refer to level $L$ as the \emph{partition level}.
Process $\PC(B)$ is responsible for all computations associated with box $B$ and the descendent boxes of $B$ in the octree, and so we also define $\mathcal{P}(A) \equiv \mathcal{P}(B)$ for any box $A \subset B$.
An example partition is illustrated in Figure~\ref{fig:cells}.

\subsection{Communication}

\begin{figure}
\centering
\includegraphics[scale=0.6]{./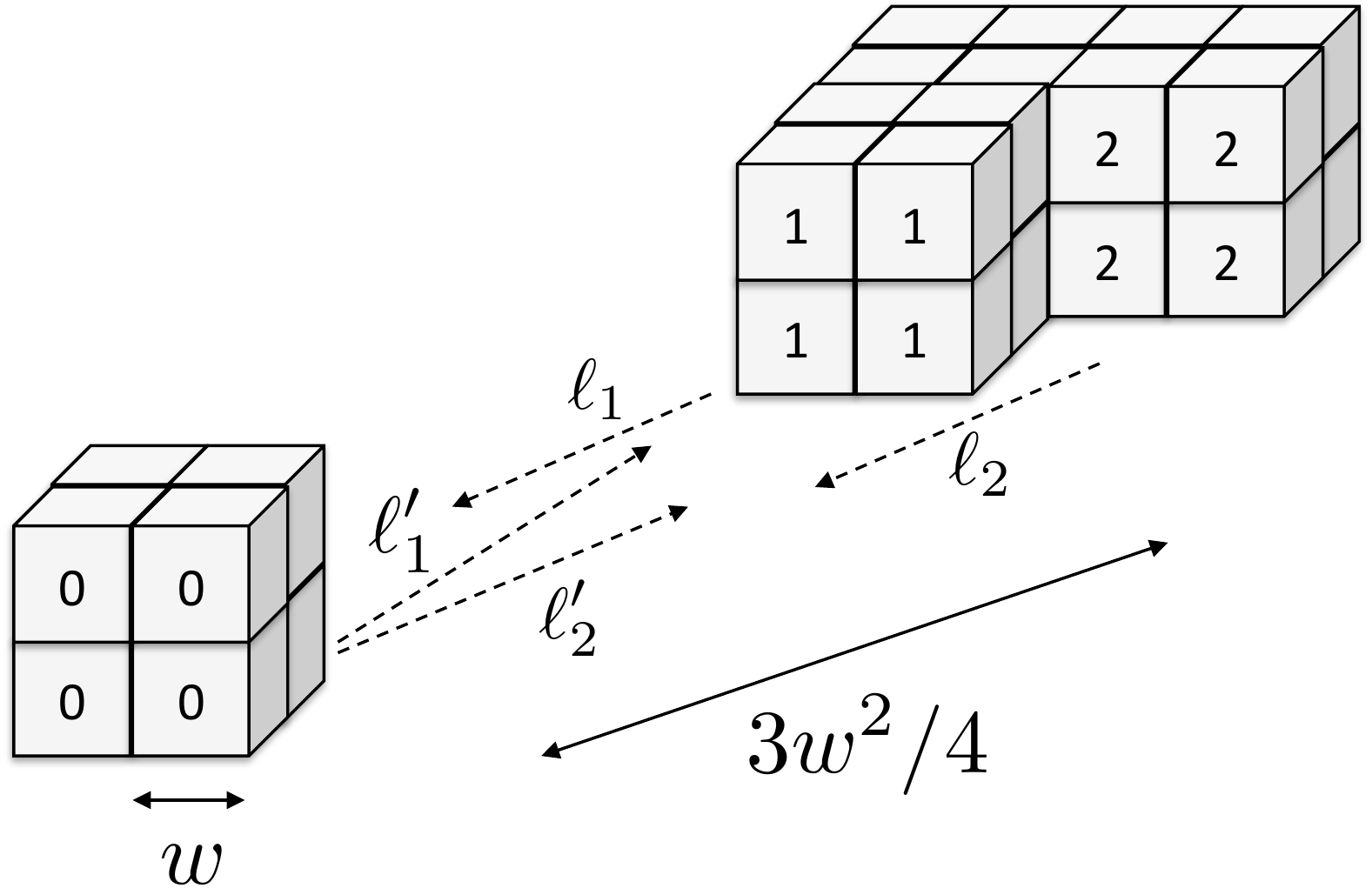}
\caption{
Interaction in the high-frequency regime that induces communication in the parallel algorithm.
In this case, process $0$ needs outgoing directional equivalent densities in direction $\ell_1$ from process $1$ and direction $\ell_2$ from process $2$.
The equivalent densities are translated to incoming directional potentials in the directions $\ell_1^{\prime}$ and $\ell_2^{\prime}$ via HF-M2L.
Similarly, processes $1$ and $2$ needs outgoing directional equivalent densities in direction $\ell_1^{\prime}$ and $\ell_2^{\prime}$ from process $0$.
}
\label{fig:interactions}
\end{figure}

We assume that the ownership of point and densities conforms to the partition $\PC$,
i.e., each process owns exactly the points and densities needed for its computations.
In practice, arbitrary point and density distributions can be handled with a simple pre-processing stage.
Our later experiments make use of k-means clustering~\cite{Macqueen:1967} to distribute the points to processes.

The partition $\PC$ dictates the communication in our algorithm.
Since ${\PC(C) = \PC(B)}$ for any any child box $C$ of $B$, there is no communication for the L2L or M2M translations in both the-low frequency and high-frequency regimes.
These translations are performed in serial on each process.
All communication is induced by the M2L translations.
In particular, for every box $A \in I^B$ for which $\PC(A) \neq \PC(B)$, process $\PC(B)$ needs outgoing (directional) equivalent densities of box $A$ from process $\PC(A)$.
We define $\ParInterListHigh{B} = \BraceOf{A \in I^B \vert \PC(A) \neq \PC(B)}$, i.e., the set of boxes in $B$'s interaction list that do not belong to the same process as $B$.
The parallel algorithm performs two bulk communication steps:
\begin{enumerate}
\item After the HF-M2M translations are complete, all processes exchange outgoing directional equivalent densities.
The interactions that induce this communication are illustrated in Figure~\ref{fig:interactions}.
For every box $B$ in the high-frequency regime and for each box $A \in \ParInterListHigh{B}$, process $\PC(B)$ requests $f^{A, o, \ell}$ from process $\PC(A)$, where $\ell$ is the index of the wedge for which $B \in I^A$.
\item After the HF-L2L translations are complete, all processes exchange outgoing non-directional equivalent densities.
For every box $B$ in the low-frequency regime and for each box $A \in \ParInterListHigh{B}$, process $\PC(B)$ requests $f^{A, o}$ from process $\PC(A)$.
\end{enumerate}

\subsection{Parallel algorithm}

We finally arrive at our proposed parallel algorithm.
Communication for the HF-M2L translations occurs after the HF-M2M translations are complete, and communication for the LF-M2L translations occurs after the HF-L2L translations are complete.
The parallel algorithm from the view of process $q$ is in Algorithm~\ref{alg:DirFMMParallel}.
We assume that communication is not overlapped with computation.
Although there is opportunity to hide communication by overlapping, the overall communication time is small in practice, and the more pressing concern is proper load balancing (see \S~\ref{sec:numerical}).

\begin{algorithm}[tbp]
\caption{Parallel directional FMM for Equation~\ref{eq:potentials} from the view of process $q$.\label{alg:DirFMMParallel}}
\KwData{points $\BraceOf{p_i}_{1 \le i \le N}$, densities $\BraceOf{f_i}_{1 \le i \le N}$, octree $\OTree$, partition $\PC$}
\KwResult{potentials $\BraceOf{u_i}_{1 \le i \le N}$}
\BlankLine
\ForEach{box $B$ in \emph{postorder} traversal of $\OTree$ with $w(B) < 1, \PC(B) = q$}{
  \ForEach{child box $C$ of $B$}{
  \texttt{// $\mathcal{P}(C) = \mathcal{P}(B) = q$, so child data is available locally}
  Transform $\{f_k^{C, o}\}$ to $\OutEqChrgs{B}$ via LF-M2M \\
  }
}
\ForEach{box $B$ in \emph{postorder} traversal of $\OTree$ with $1 \le w(B) \le \sqrt{K}, \PC(B) = q$}{
   \ForEach{direction $\ell$ of $B$}{
	Let the $\ell^{\prime}$-th wedge of $B$'s children contain the $\ell$-th wedge of $B$ \\ 
	\ForEach{child box $C$ of $B$}{
           \texttt{// $\mathcal{P}(C) = \mathcal{P}(B) = q$, so child data is available locally}
            Transform $\{f_k^{C, o, \ell^{\prime}}\}$ to $\OutDirEqChrgs{B}$ via HF-M2M
         }   
    }
}
\texttt{// High-frequency M2L data communication} \\
\ForEach{box $B$ with $w(B) > 1$ and $\PC(B) = q$} {
  \ForEach{box $A \in \ParInterListHigh{A}$} {
	  Let $\ell$ be the direction for which $B \in I^A$ \\
	  Request $f^{A, o, \ell}$ from process $\PC(A)$
  }
}
\ForEach{box $B$ in \emph{preorder} traversal of $\OTree$ with $1 \le w(B) \le \sqrt{K}, \PC(B) = q$}{
   \ForEach{direction $\ell$ of $B$}{
        \ForEach{box $A \in \InterListHigh{B}$ in direction $\ell$}{
	       Let $\ell^{\prime}$ be the direction for which $B \in \InterListHigh{A}$ \\
	       Transform $\BraceOf{f_k^{A, o, \ell^{\prime}}}$ via HF-M2L and add result to $\InDirCheckPots{B}$
        }
        \ForEach{child box $C$ of $B$}{
          \texttt{// $\mathcal{P}(C) = \mathcal{P}(B) = q$, so child data is available locally}
          Transform $\InDirCheckPots{B}$ to $\InDirCheckPots{C}$ via HF-L2L.
        }
    }
}
\texttt{// Low-frequency M2L data communication} \\
\ForEach{box $B$ with $w(B) > 1$ and $\PC(B) = q$} {
  \ForEach{box $A \in \ParInterListHigh{A}$} {
    Request $f^{A, o}$ from process $\PC(A)$
  }
}
\ForEach{box $B$ in \emph{preorder} traversal of $\OTree$ with $w(B) < 1, \PC(B) = q$}{
  \ForEach{$A \in \InterListLow{B}$}{
    Transform $\OutEqChrgs{A}$ via LF-M2L and add the result to $\InCheckPots{B}$
    }
    Transform $\InCheckPots{B}$ via LF-L2L \\
    \eIf{$B$ is a leaf box}{
       \ForEach{$p_i \in B$}{Add result to $u_i$}
    } {
      \texttt{// $\mathcal{P}(C) = \mathcal{P}(B) = q$, so child data is available locally}
      \ForEach{child box $C$ of $B$}{Add result to $\InCheckPots{C}$}
    }
}
\end{algorithm}

\subsection{Communication complexity}
\label{subsec:comm_analysis}

We will now discuss the communication complexity of our algorithm
assuming that the $N$ points are quasi-uniformly sampled from a
two-dimensional manifold with $N = \bigoh{K^2}$, which is generally
satisfied for many applications which involve boundary integral
formulations.

\begin{proposition}
\label{prop:comm}
  Let $\mathbb{S}$ be a surface in $B(0,1/2)$. Suppose that for a fixed $K$,
  the points $\{p_i, 1 \le i \le N\}$ are samples of $K\mathbb{S}$, where
  $N=\bigoh{K^2}$ and $K\mathbb{S} = \{K \cdot p \vert p\in \mathbb{S}\}$ 
  (the surface obtained by magnifying $\mathbb{S}$ by a factor of $K$). 
  Then, for any prescribed accuracy, the proposed algorithm sends at most
  $\bigoh{K^2 \log K} = \bigoh{N\log N}$ data in aggregate.
\end{proposition}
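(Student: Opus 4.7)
The plan is to bound the aggregate data volume by partitioning the communication according to octree level, using the two-dimensional manifold structure to control both the number of non-empty boxes at each level and the size of each interaction list. I would first observe that every individual request in Algorithm~\ref{alg:DirFMMParallel} transmits a single outgoing (directional) equivalent-charge vector $f^{A,o,\ell}$ or $f^{A,o}$, whose length depends only on the target accuracy and is therefore $O(1)$; hence it suffices to bound the total number of pairs $(B, A)$ with $A \in I^B$, summed over all levels, and then multiply by this constant.

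Next, I would fix an HF level with box width $w \in \{1, 2, \ldots, \sqrt{K}\}$ and carry out two geometric counts. Since the magnified surface $K\mathbb{S}$ has area $O(K^2)$ and any box of width $w$ meets the surface in a patch of area $O(w^2)$, the number of non-empty boxes at this level is $O(K^2/w^2)$. For any such $B$, the parent's near field $N^P$ has radius $O(w^2)$ by the computation in the proof of Proposition~\ref{prop:near_field}, so $I^B$ lies in a ball of radius $O(w^2)$ centered at $B$; intersecting this ball with $K\mathbb{S}$ gives a surface patch of area $O(w^4)$, which contains at most $O(w^2)$ non-empty boxes of width $w$. Multiplying, the number of $(B, A)$ interaction pairs at this level is $O(K^2/w^2) \cdot O(w^2) = O(K^2)$, so the aggregate HF-M2L data at this level is $O(K^2)$. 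Summing over the $O(\log K)$ HF levels (widths doubling from $1$ to $\sqrt{K}$) will give $O(K^2 \log K)$ total HF communication.

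For the LF regime I expect only a lower-order contribution: the manifold assumption forces leaf widths to be $\Theta(1)$ (since $N = O(K^2)$ points on a surface of area $O(K^2)$ give $O(1)$ points per unit area), so only $O(1)$ LF levels participate, each with $O(K^2)$ non-empty boxes and an $O(1)$-size interaction list per box by the standard KI-FMM near-field accounting. Combining the two regimes and using $N = O(K^2)$, so that $\log K = \Theta(\log N)$, will then yield the claimed $O(N \log N)$ bound.

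The hard part will be the $O(w^2)$ interaction-list count per box: the bound is tight only under the 2D manifold hypothesis, since a naive three-dimensional volume bound applied to a ball of radius $O(w^2)$ would give $O(w^3)$ boxes per interaction list and hence $O(K^2 w)$ data per HF level, summing to $O(K^{5/2})$ rather than $O(K^2 \log K)$. The entire improvement hinges on converting the parabolic radius $O(w^2)$ into a surface-area bound of $O(w^4)$ rather than a volume bound of $O(w^6)$, so I would be careful to state explicitly where the manifold hypothesis is used in each of the two counts.
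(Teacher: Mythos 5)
Your proposal is correct and follows essentially the same route as the paper's own proof outline: per-level counting with $\bigoh{K^2/w^2}$ non-empty boxes of width $w$, an $\bigoh{w^4/w^2}=\bigoh{w^2}$ bound on each interaction list from intersecting the radius-$\bigoh{w^2}$ near-field ball with the surface, $\bigoh{1}$ data per interaction, summation over the $\bigoh{\log K}$ high-frequency levels, and an $\bigoh{N}$ bound for the low-frequency regime. Your explicit remark about where the two-dimensional manifold hypothesis enters (surface-area versus volume counting) is a useful clarification but not a different argument.
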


\begin{proof}[Outline of the proof]
  We analyze the amount of data communicated at each of two steps.
\begin{itemize}
         
\item 

We claim that at most $\bigoh{N \log N}$ data is communicated for HF-M2L.
We know that, for a box $B$ of width $w$, the boxes in its high-frequency interaction list are approximately located in a ball centered at $B$ with radius $3w^2/4$ and that there are $\bigoh{w^2}$ directions $\BraceOf{\ell}$.
The fact that our points are samples from a two-dimensional manifold implies that there are at most $\bigoh{w^4/w^2} = \bigoh{w^2}$ boxes in $B$'s interaction list.
Each box in the interaction list contributes at most a constant amount of communication.
Since the points are sampled from a two-dimensional manifold, there are $\bigoh{K^2 / w^2}$ boxes of width $w$.
Thus, each level in the high-frequency regime contributes at most $\bigoh{K^2}$ communication.
We have $\bigoh{\log K}$ levels in the high-frequency regime ($w = 1, 2, 4, \ldots, \sqrt{K}$), so the total communication volume for HF-M2L is at most $\bigoh{K^2\log K} = \bigoh{N \log N}$.

\item

In the low-frequency regime, there are $\bigoh{N}$ boxes, and the interaction list of each box is of a constant size.
Thus, at most $\bigoh{N}$ data is communicated for LF-M2L.

\end{itemize} \end{proof}

To analyze the cost of communication, we use a common model: each process can simultaneously send and receive a single message at a time, and the cost of sending a message with $n$ units of data is $\alpha + \beta n$~\cite{Ballard:2011, Chan:2007, Thakur:2005}.
$\alpha$ is the start-up cost, or latency, and $\beta$ is the inverse bandwidth.
At each of the two communication steps, each process needs to communicate with every other process.
If  the communication is well-balanced at each level of the octree, i.e., the amount of data communicated between any two processes is roughly the same, then the communication is similar to an ``all-to-all" or ``index" pattern.
The communication time is then $\bigoh{p\alpha + \frac{1}{p}K^2\log K \beta}$ or $\bigoh{\log p \alpha + \frac{\log p}{p} K^2\log K \beta}$ using standard algorithms~\cite{Bruck:1997, Thakur:2005}.
We emphasize that, while we have not shown that such balanced partitions even 
always exist, if the partition $\PC$ clusters the assignment of boxes to processes, then many boxes in the interaction lists are already owned by the correct process.
This tends to reduce communication, and in practice, the actual communication time is small (see subsection~\ref{subsec:strong}).

\subsection{Computational complexity}
\label{subsec:comp_analysis}

Our algorithm parallelizes all of the translation operators, which constitute nearly all of the computations.
This is a direct result from the assignment of boxes to processes by $\PC$.
However, the computational complexity analysis depends heavily on the partition.
We will make idealizing assumptions on the partitions to show how the complexity changes with the number of processes, $p$.

Recall that we assume that communication and computation are not overlapped.
Thus, the translations are computed simultaneously on all processes.

\begin{proposition}
\label{prop:LF_comp}
Suppose that each process has the same number boxes in the low-frequency regime.
Then the LF-M2M, LF-M2L, and LF-L2L translations have computational complexity $\bigoh{N / p}$.
\end{proposition}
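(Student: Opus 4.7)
The plan is a direct counting argument: bound the number of low-frequency boxes owned by each process by $\bigoh{N/p}$, observe that each of the three translation operators costs $\bigoh{1}$ per application (with a constant depending only on the prescribed accuracy), and sum.

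First, I would recall the sequential accounting inherited from the KI-FMM. Because the octree is adaptive in the low-frequency regime with a fixed bound on the number of points per leaf, the total number of non-empty low-frequency boxes is $\bigoh{N}$. For each such box $B$, the interaction list $\InterListLow{B}$ has cardinality bounded by a dimension-dependent constant, and the LF-M2M, LF-M2L, and LF-L2L operators each require $\bigoh{1}$ work per application, the hidden constant depending only on the number of equivalent and check points (fixed once the accuracy is chosen).

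Next, I would invoke the balance hypothesis: each process owns $\bigoh{N/p}$ low-frequency boxes. By the construction of $\PC$, all descendants of a box $B$ belong to the same process as $B$, so for every owned box the children used in LF-M2M and LF-L2L are locally available, and these translations are performed entirely on process $q$ at constant cost per box, yielding $\bigoh{N/p}$ work each. For LF-M2L, process $q$ performs exactly the translations whose target is an owned box $B$; each such $B$ contributes a constant number of sources from $\InterListLow{B}$, each at $\bigoh{1}$ cost. Summing the three contributions gives the claim.

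The main obstacle here is essentially bookkeeping rather than a substantive mathematical difficulty, so I do not anticipate a hard step. The only mild subtlety worth noting is that an LF-M2L translation whose source box $A$ lives on a different process still contributes its arithmetic cost to the owner of the target box $B$; this is consistent with the convention in the preceding subsection that data motion and local translation work are separated, and with the stated assumption that communication and computation are not overlapped, so the translation work is counted once, on process $\PC(B)$, after the necessary equivalent charges have been received.
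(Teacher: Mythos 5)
Your proposal is correct and follows essentially the same argument as the paper: constant work per low-frequency box (constant-size interaction lists and fixed numbers of equivalent/check points), $\bigoh{N}$ such boxes in total, and the balance hypothesis giving $\bigoh{N/p}$ per process. The extra bookkeeping about where LF-M2L work is charged is a harmless elaboration of the same idea.
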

\begin{proof}
Each non-empty box requires a constant amount of work for each of the low-frequency translation operators since the interaction lists are of constant size.
There are $\bigoh{N}$ boxes in the low-frequency regime, so the computational complexity is $\bigoh{N / p}$ if each process has the same number of boxes.
\end{proof}

We note that Proposition~\ref{prop:LF_comp} makes no assumption on the assignment of the computation to process at any fixed level in the octree:
we only need a load-balanced partition throughout \emph{all} of the boxes in the low-frequency regime.
It is important to note that empty boxes (i.e., boxes that do not contain any of the $\BraceOf{p_i}$) require no computation.
When dealing with two-dimensional surfaces in three dimensions, this poses a scalability issue (see subsection~\ref{subsec:scalability}).

For the high-frequency translation operators,
instead of a load balancing the total number of boxes,
we need to load balance the number of directions.
We emphasize that $\PC$ partitions boxes.
Thus, if $\PC(B) = q$, process $q$ is assigned all directions for the box $B$.

\begin{proposition}
\label{prop:HF_comp}
Suppose that each process is assigned the same number of directions and that the assumptions of Proposition~\ref{prop:comm} are satisfied.
Then the HF-M2M, HF-M2L, and HF-L2L translations have computational complexity $\bigoh{N\log N / p}$.
\end{proposition}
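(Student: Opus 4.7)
The plan is to first establish a level-by-level bound on the aggregate sequential cost of the three high-frequency translation operators, then combine this with the load-balancing hypothesis.

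First I would argue that the work per translation is constant. Since the equivalent/check sets have size $r_\epsilon$ depending only on the prescribed accuracy (as noted in subsection~\ref{subsec:dir_lowrank}), each individual HF-M2M, HF-L2L, and HF-M2L translation amounts to applying a precomputed dense matrix of size $\bigoh{1} \times \bigoh{1}$. So the total work reduces to counting the number of (box, direction) pairs processed by each operator.

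Next I would aggregate over one level in the high-frequency regime, for each width $w \in \{1, 2, 4, \ldots, \sqrt{K}\}$. Reusing the two geometric facts already invoked in the proof of Proposition~\ref{prop:comm}: under the surface-sampling hypothesis there are $\bigoh{K^2/w^2}$ nonempty boxes of width $w$, there are $\bigoh{w^2}$ directions per box, and the total high-frequency interaction list of each box contains $\bigoh{w^2}$ boxes (summed over all directions). For HF-M2M and HF-L2L the work per level is therefore $\bigoh{K^2/w^2} \cdot \bigoh{w^2} \cdot \bigoh{1} = \bigoh{K^2}$, and for HF-M2L it is $\bigoh{K^2/w^2} \cdot \bigoh{w^2} \cdot \bigoh{1} = \bigoh{K^2}$ as well. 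Summing over the $\bigoh{\log K}$ widths in the high-frequency regime yields aggregate work $\bigoh{K^2 \log K} = \bigoh{N \log N}$, since $N = \bigoh{K^2}$.

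Finally I would invoke the load-balancing hypothesis. Because each process is assigned the same number of directions (and, by the partition $\PC$, owns every (box, direction) pair at and below its assigned boxes of width $\sqrt{K}$), and because the work attached to each (box, direction) pair is $\bigoh{1}$ for all three operators, each process performs a $1/p$ fraction of the aggregate work, i.e., $\bigoh{N \log N / p}$.

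The main obstacle is the M2L step: the bound ``$\bigoh{1}$ work per (box, direction) pair'' is really an averaged statement over directions, so I would have to be careful to apply the count to the \emph{aggregate} work at each level (summed over all boxes and all directions), rather than to any single direction in isolation. The hypothesis that each process is assigned the same number of directions is then the right hook, since balanced direction counts across the full octree — together with the uniform $\bigoh{1}$-per-pair bound in aggregate — divide the total $\bigoh{N \log N}$ work evenly among the $p$ processes.
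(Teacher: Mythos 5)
Your proposal is correct and follows essentially the same route as the paper's own proof: $\bigoh{w^2}$ directions per box, $\bigoh{K^2/w^2}$ nonempty boxes per level, $\bigoh{1}$ work per direction (with the interaction-list count taken from Proposition~\ref{prop:comm}), $\bigoh{\log K}$ levels giving $\bigoh{N\log N}$ total work, then the equal-directions hypothesis to divide by $p$. Your remark that the $\bigoh{1}$-per-direction M2L bound is really an aggregate/average count is a slightly more careful reading than the paper, which states it per direction, but the decomposition and conclusion are identical.
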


\begin{proof}
For HF-M2M, each box of width $w$ contains $\bigoh{w^2}$ directions $\BraceOf{\ell}$.
Each direction takes $\bigoh{1}$ computations.
There are $\bigoh{\log K}$ levels in the high-frequency regime.
Thus, in parallel, HF-M2M translations take $\bigoh{N \log N / p}$ operations provided that each process has the same number of directions.
The analysis for HF-L2L is analogous.

Now consider HF-M2L.
Following the proof of Proposition~\ref{prop:comm}, there are $\bigoh{w^2}$ directions $\BraceOf{\ell}$ for a box $B$ of width $w$.
For each direction, there is $\bigoh{1}$ boxes in $B$'s interaction list, and each interaction is $\bigoh{1}$ work.
Thus, each direction $\ell$ induces $\bigoh{1}$ computations.
There are $\bigoh{K^2 \log K} = \bigoh{N \log N}$ directions.
Therefore, in parallel, HF-M2L translations take $\bigoh{N \log N / p}$ operations provided that each process has the same number of directions.
\end{proof}

The asymptotic analysis absorbs several constants that are important
for understanding the algorithm.  In particular, the size of the
translation matrix $D$ in Equation~\ref{eq:equivs1} drives the running
time of the costly HF-M2L translations.  The size of $D$ depends on
the box width $w$, the direction $\ell$, and the target accuracy
$\epsilon$.  In subsection~\ref{subsec:precomp}, we discuss the size
of $D$ for problems used in our numerical experiments.

\subsection{Scalability}
\label{subsec:scalability}

Recall that a single process performs the computations associated with all descendent boxes of a given box at the partition level.
Thus, the scalability of the parallel algorithm is limited by the number of non-empty boxes in the octree at the partition level.
If there are $n$ boxes at the partition level that contain points, then the maximum speedup is $n$.
Since the boxes at the partition level can have width as small as $\bigoh{\sqrt{K}}$, there are $\bigoh{(K / \sqrt{K})^3} = \bigoh{K^{3/2}}$ total boxes at the partition level.
However, in the scattering problems of interest, the points are sampled from a two-dimensional surface and are highly non-uniform.
In this setting, the number of nonempty boxes of width $w$ is $\bigoh{K^2 / w^2}$.
The boxes at the partition level have width $\bigoh{\sqrt{K}}$, so the number of nonempty boxes is $\bigoh{K}$.
If $p = \bigoh{K}$, then the communication complexity is $\bigoh{\alpha K + K\log K\beta} = \bigoh{\alpha \sqrt{N} + \sqrt{N}\log N \beta}$, and the computational complexity is $\bigoh{\sqrt{N}\log N}$.

The scalability of our algorithm is limited by the geometry of our objects, which we observe in subsection~\ref{subsec:strong}.
However, we are still able to achieve significant strong scaling with this limitation.

\section{Numerical results}
\label{sec:numerical}

We now illustrate how the algorithm scales through a number of numerical experiments.
Our algorithm is implemented with C++, and the communication is implemented with the Message Passing Interface (MPI).
All experiments were conducted on the Lonestar supercomputer at the Texas Advanced Computing Center.
Each node has two Xeon Intel Hexa-Core 3.3 GHz processes (i.e., 12 total cores) and 24 GB of DRAM, and the nodes are interconnected with InfiniBand.
Each experiment used four MPI processes per node.
The implementation of our algorithm is available at \url{https://github.com/arbenson/ddfmm}.

\subsection{Pre-computation and separation rank}
\label{subsec:precomp}

\begin{table}[tbp]
\centering
\caption{
Maximum dimension of the translation matrix $D$ from Equation~\ref{eq:equivs1} for each target accuracy $\epsilon$ and box width $w$.
Given $\epsilon$, the maximum dimension is approximately the same for each box width.
}
\begin{tabular}{l c c c c c}
\toprule
 &  $w = 1$ & $w = 2$ & $w = 4$ & $w = 8$ & $w = 16$ \\
\midrule
$\epsilon = $1e-4 & 60 & 58 & 60 & 63 & 67 \\
$\epsilon = $1e-6 & 108 & 93 & 96 & 95 & 103 \\
$\epsilon = $1e-8 & 176 & 142 & 141 & 136 & 144\\
\bottomrule
\end{tabular}
\label{tab:seprank}
\end{table}

The pre-computation for the algorithm is the computation of the
matrices $D=(d_{pq})_{1\le p,q\le r_\epsilon}$ used in
Equation~\ref{eq:equivs1}.  For each box width $w$, there are
approximately $2w^2$ directions $\ell$ that cover the directional
interactions.  Each direction at each box width requires the
high-frequency translation matrix $D$.  The dimension of $D$ is called
the \emph{separation rank}.  The maximum separation rank for each box
width $w$ and target accuracy $\epsilon$ is in
Table~\ref{tab:seprank}.  We see that the separation rank is
approximately constant when $\epsilon$ is fixed.  As $\epsilon$
increases, our algorithm is more accurate and needs larger translation
matrices.

Table~\ref{tab:precomp_mem} lists the size of the data files used in our experiments that store the translation matrices for the high- and low-frequency regimes.
Each process stores this data in memory.

\begin{table}[tbp]
\centering
\caption{
Size of files that store pre-computed  data for the parallel algorithm.
The data are suitable for $K < 1024$.  For larger values of $K$, translation matrices for more boxes would be needed.
}
\begin{tabular}{l c c c c c c c}
\toprule
& $\epsilon=$ 1e-4 & $\epsilon=$1e-6 & $\epsilon=$1e-8 \\
\midrule
High frequency & 89 MB & 218 MB & 440 MB \\
Low frequency & 19 MB & 64 MB & 158 MB \\
\bottomrule
\end{tabular}
\label{tab:precomp_mem}
\end{table}

\subsection{Strong scaling on different geometries}
\label{subsec:strong}

\begin{figure}[tbp]
	\centering
	\includegraphics[scale=0.08]{./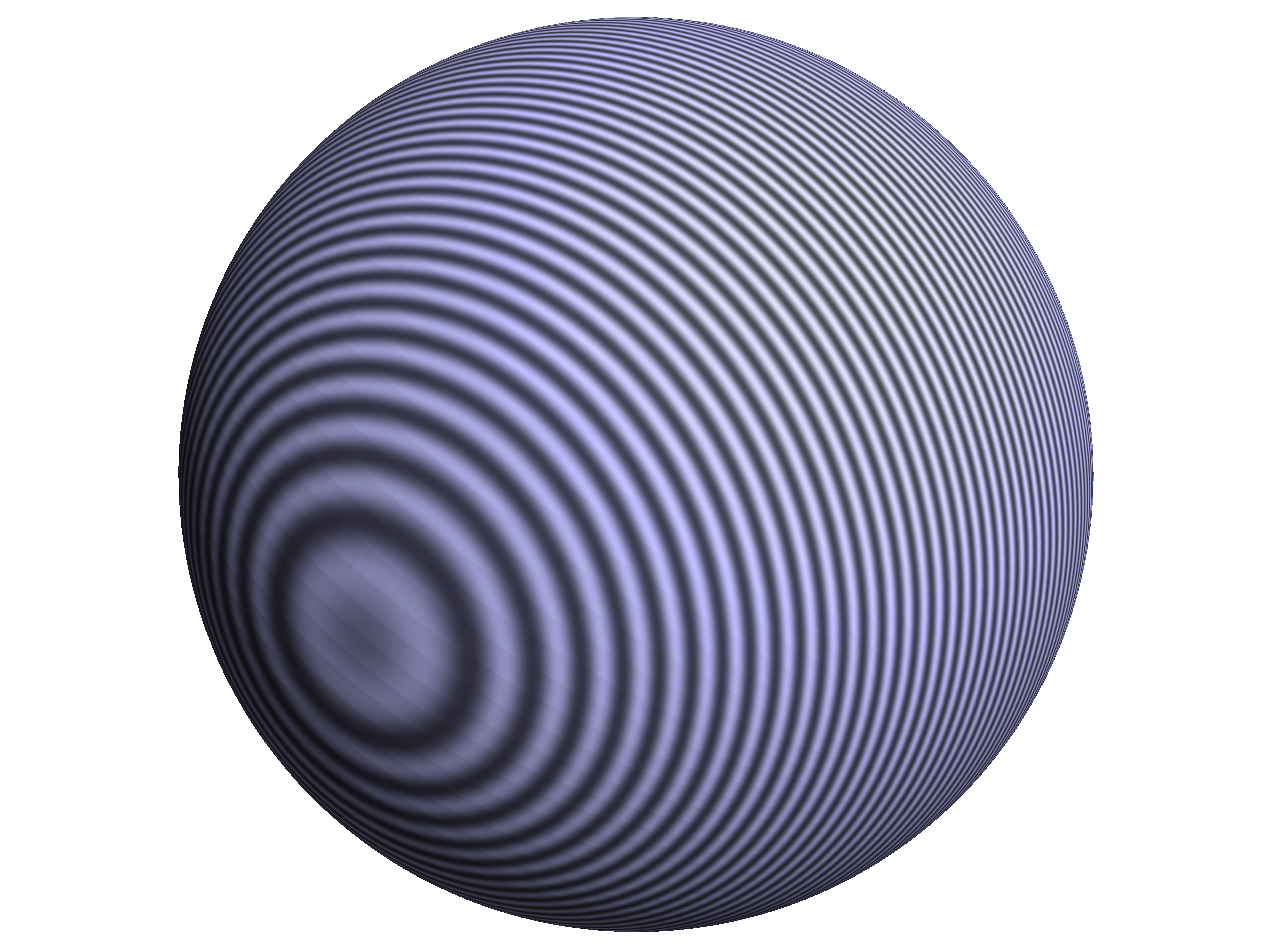}
	\includegraphics[scale=0.08]{./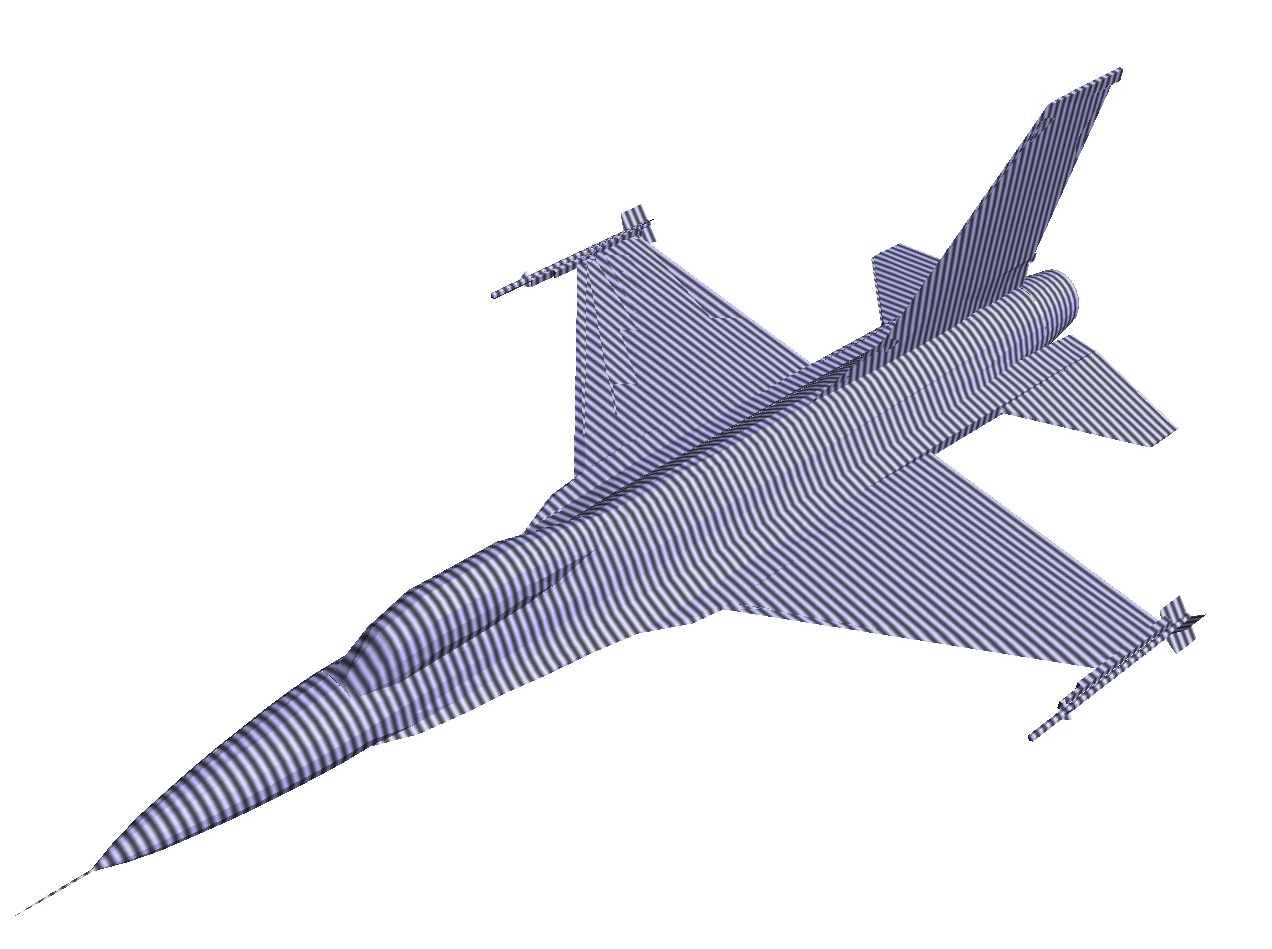}
	\includegraphics[scale=0.08]{./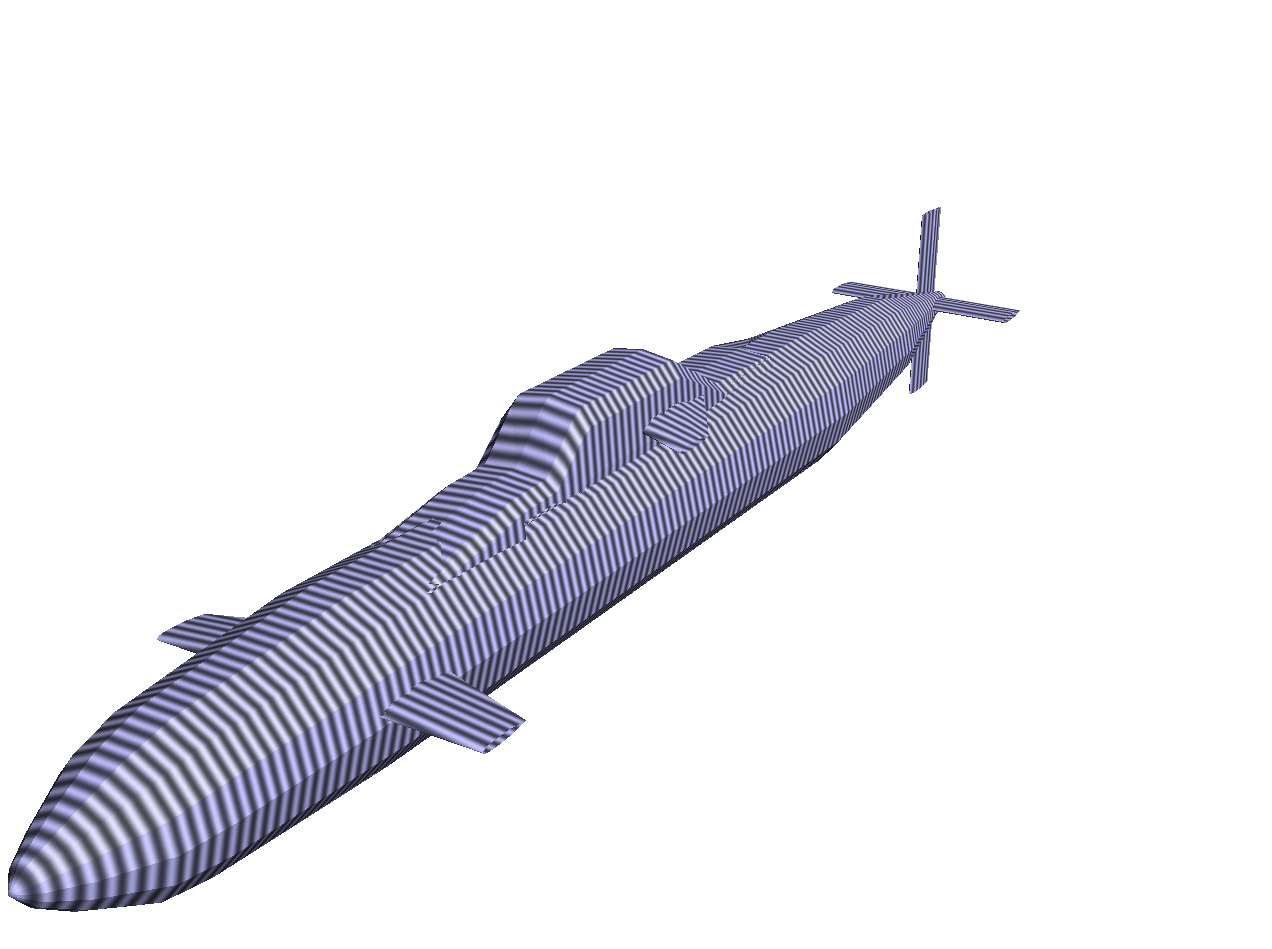}
	\caption{Sphere (left), F16 (middle), and Submarine (right) geometries.}
	\label{fig:geoms}
\end{figure}

We test our algorithm on three geometries: a sphere, an F16 fighter aircraft, and a submarine.
The geometries are in Figure~\ref{fig:geoms}.
The surface of each geometry is represented by a triangular mesh, and the point set $\BraceOf{p_i}$ is generated by sampling the triangular mesh randomly with $10$ points per wavelength on average.
The densities $\BraceOf{f_i}$ are generated by independently sampling from a standard normal distribution.

In our experiments, we use simple clustering techniques to partition the data and create the partition $\PC$.
To partition the points, we use k-means clustering, ignoring the octree boxes where the points live.
To create $\PC$, we use an algorithm that greedily assigns a box to the process that owns the most points in the box while also balancing the number of boxes on each process.
Algorithm~\ref{alg:comp_partition} formally describes the construction of $\PC$.
Because the points are partitioned with k-means clustering, Algorithm~\ref{alg:comp_partition} outputs a map $\PC$ that tends to cluster the assignment of process to box as well.

\begin{algorithm}[tbph]
\caption{Construction of $\PC$ used in numerical experiments. \label{alg:comp_partition}}
\KwResult{partition $\PC$}
\BlankLine
Compute $B_{max} = \lceil \# \left(\text{non-empty boxes at partition level}\right) / p \rceil$ \\
$C = \BraceOf{0, 1, \ldots, p-1}$ \\
\For{$i = 0$ to $p - 1$}{
$B_i = 0$
}
\ForEach{non-empty box $B$ at level $\PLevel$}{
  $i = \arg\max_{j \in C} \left|\BraceOf{p_k \in B \vert p_k \text{ on process } j}\right|$ \\
  $\PC(B) = i$ \\
  $B_i = B_i + 1$ \\
  \If{$B_i == B_{max}$}{
    $C = C \backslash \BraceOf{i}$
  }
}
\end{algorithm}

\begin{figure}[tbp]
\centering
\includegraphics[width=0.6\textwidth]{./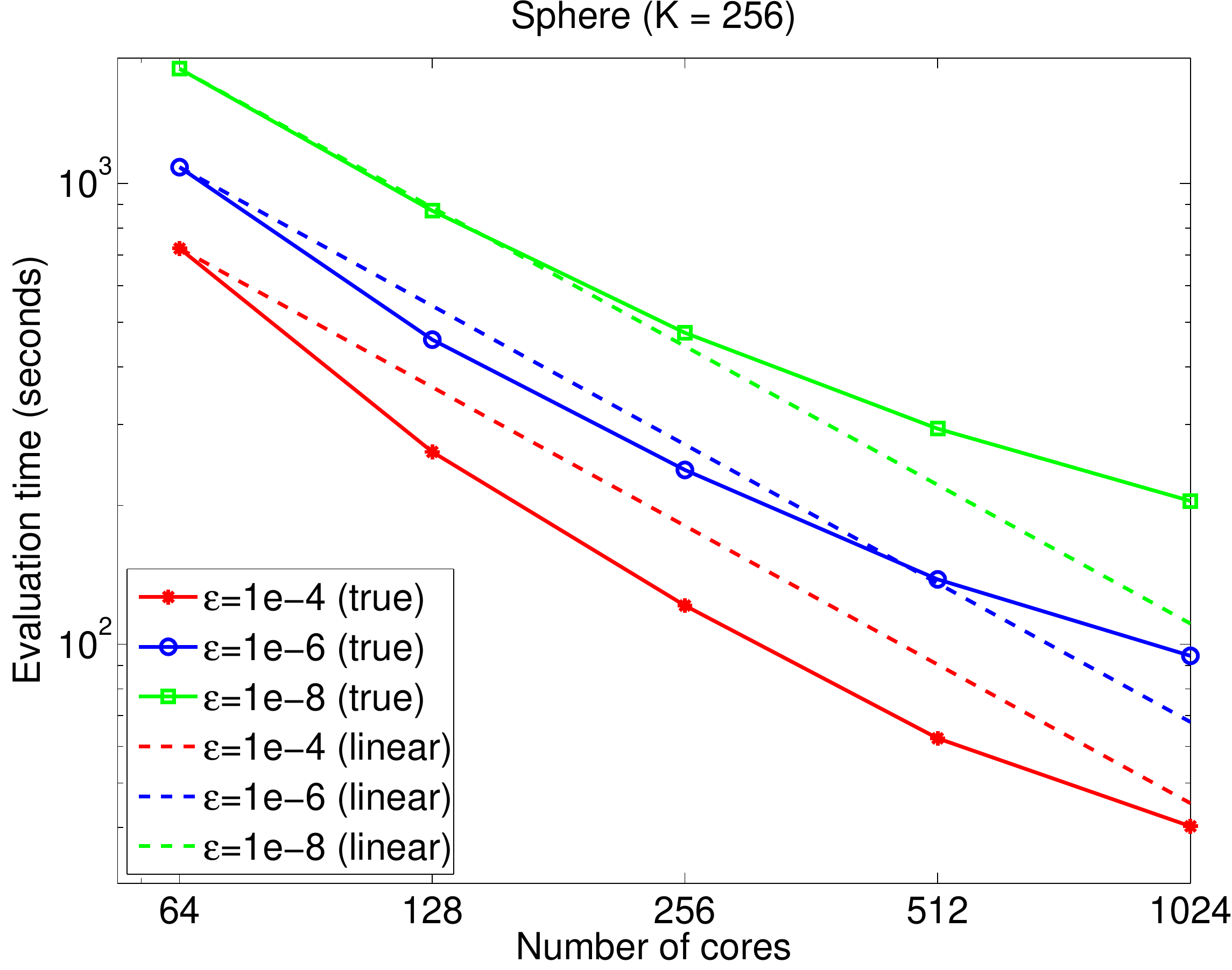} \\
\includegraphics[width=0.6\textwidth]{./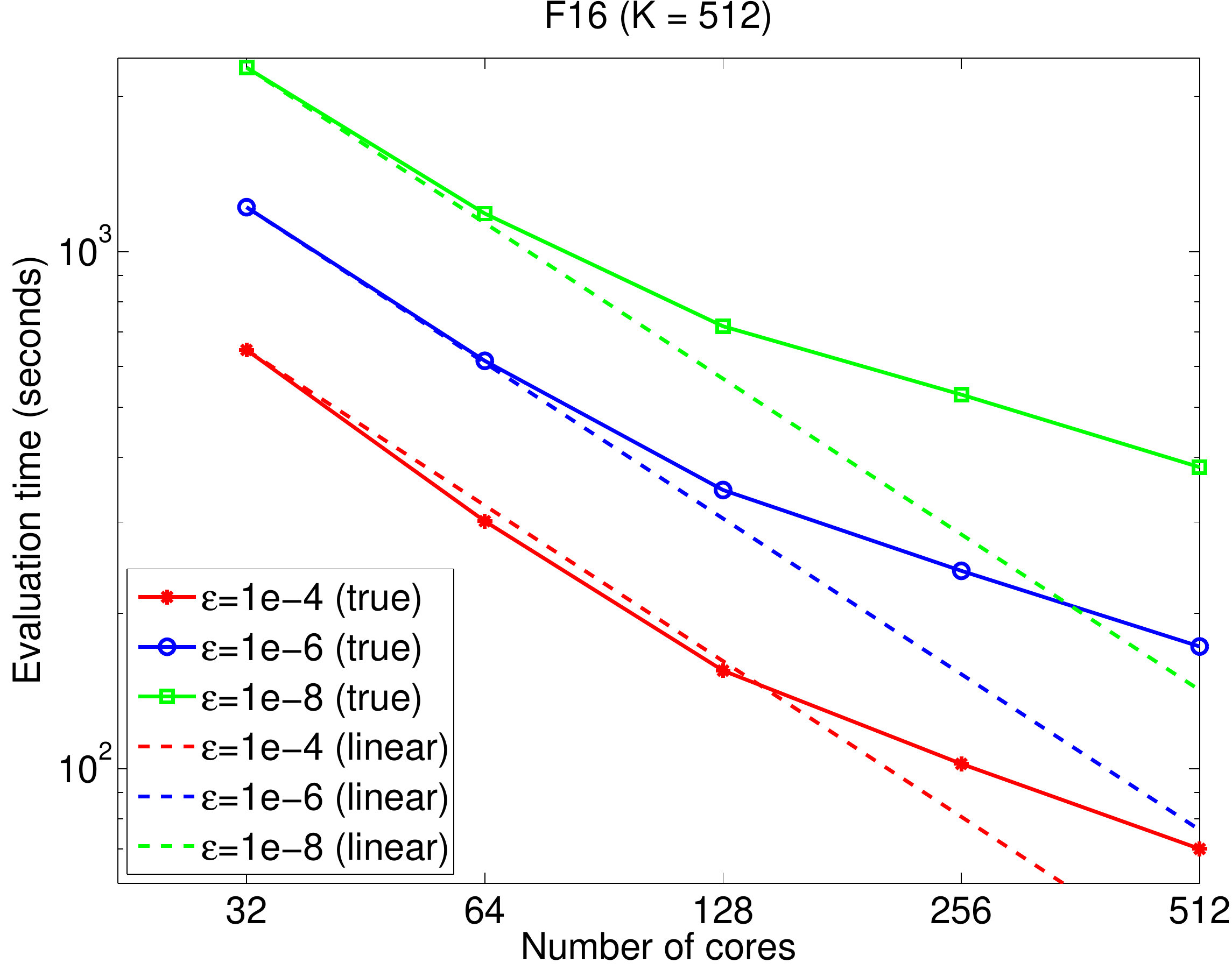}
\includegraphics[width=0.6\textwidth]{./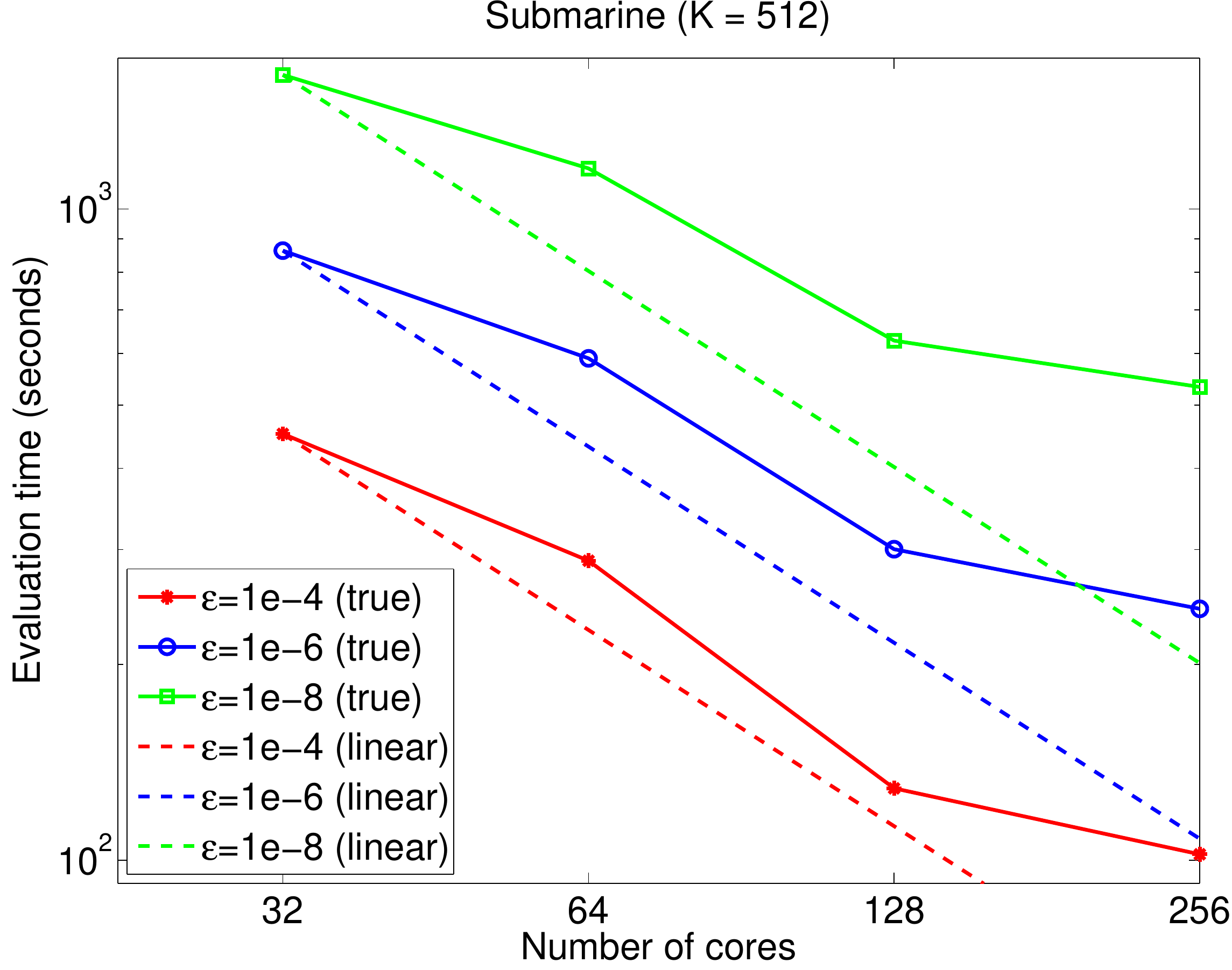}
\caption{
Weak scaling experiments for Sphere ($K = 256$), F16 ($K = 512$), and Submarine ($K = 512$).  Dashed lines are linear scaling from the running time with the smallest number of cores.
}
\label{fig:strong}
\end{figure}

\begin{table}[tbp]
\centering
\caption{
Number of non-empty boxes at the partition level $\PLevel$.
As discussed in subsection~\ref{subsec:scalability}, this limits scalability.
In all numerical experiments for all geometries, $\PLevel = 5$.
}
\begin{tabular}{l c c }
\toprule
Geometry & \#(non-empty boxes) & \#(boxes) \\
\midrule
Sphere & 2,136 & 32,768\\
F16 & 342 & 32,768\\
Submarine & 176 & 32768\\
\bottomrule
\end{tabular}
\label{tab:nonempty}
\end{table}

Figure~\ref{fig:strong} shows strong scaling results for the algorithm
on the geometries with target accuracy $\epsilon = $1e-4, 1e-6, and
1e-8.  The smallest number of cores used for each geometry is chosen
as the smallest power of two such that the algorithm does not run out
of memory when $\epsilon = $ 1e-8.  Our algorithm scales best for the
sphere geometry.  This makes sense because the symmetry of the
geometry balances the number of directions assigned to each process.
While we use a power-of-two number of processes in the scaling
experiments, we note that our implementation works for an arbitrary
number of processes.

Scaling levels off at 512 cores for the F16 and 256 cores for the
Submarine.  This is a direct consequence of the number of non-empty
boxes in the octree at the partition level, which was discussed in
subsection~\ref{subsec:scalability}.  Table~\ref{tab:nonempty} lists
the number of nonempty boxes for the three geometries.

\begin{table}[tbp]
\centering
\caption{
Breakdown of running times for the parallel algorithm.
All listed times are in seconds.
For all geometries, the more accurate solutions produce worse scalings}
\begin{tabular}{l c c c c c c c c}
\toprule
Geometry (K) & $\epsilon$ & p & HF-M2M & HF-M2L& LF-M2L& HF+LF & Total & Efficiency \\
& & & & + HF-L2L & + LF-L2L & Comm. &  \\
\midrule
Sphere (256) & 1e-4 & 64 & 29.44 & 525.69 & 159.83 & 5.53 & 722.73 & 100.0\% \\ 
 & & 128 & 17.76 & 187.88 & 50.85 & 2.77 & 261.55 & 138.16\% \\ 
 & & 256 & 12.1 & 88.7 & 17.54 & 1.96 & 121.32 & 148.93\% \\ 
 & & 512 & 8.83 & 45.75 & 6.19 & 0.97 & 62.47 & 144.61\% \\ 
 & & 1024 & 7.28 & 29.12 & 2.8 & 0.71 & 40.25 & 112.22\% \\ 
 \midrule
Sphere (256) & 1e-6 & 64 & 70.4 & 773.39 & 229.38 & 8.18 & 1084.9 & 100.0\% \\ 
 & & 128 & 43.21 & 334.31 & 73.89 & 4.39 & 458.11 & 118.41\% \\ 
 & & 256 & 30.57 & 177.5 & 26.6 & 2.16 & 238.92 & 113.52\% \\ 
 & & 512 & 23.0 & 102.91 & 10.1 & 1.31 & 138.3 & 98.05\% \\ 
 & & 1024 & 18.71 & 69.02 & 4.86 & 1.04 & 94.36 & 71.86\% \\ 
 \midrule
Sphere (256) & 1e-8 & 64 & 167.73 & 1293.32 & 284.51 & 21.24 & 1773.42 & 100.0\% \\ 
 & & 128 & 101.58 & 657.19 & 101.22 & 8.16 & 872.38 & 101.64\% \\ 
 & & 256 & 70.41 & 359.07 & 39.89 & 2.7 & 474.19 & 93.5\% \\ 
 & & 512 & 52.31 & 220.45 & 17.93 & 1.52 & 293.57 & 75.51\% \\ 
 & & 1024 & 42.28 & 150.04 & 9.33 & 1.2 & 204.67 & 54.15\% \\ 
 \midrule
 F16 (512) & 1e-4 & 32 & 33.22 & 470.32 & 134.85 & 4.49 & 646.01 & 100.0\% \\ 
 & & 64 & 19.32 & 230.77 & 46.92 & 2.48 & 301.27 & 107.21\% \\ 
 & & 128 & 12.66 & 122.01 & 17.67 & 1.43 & 154.88 & 104.27\% \\ 
 & & 256 & 8.93 & 80.78 & 9.67 & 1.68 & 102.23 & 78.99\% \\ 
 & & 512 & 7.14 & 56.32 & 5.11 & 0.73 & 70.0 & 57.68\% \\ 
 \midrule
F16 (512) & 1e-6 & 32 & 84.26 & 895.07 & 229.44 & 5.25 & 1219.35 & 100.0\% \\ 
 & & 64 & 49.46 & 480.55 & 79.28 & 3.13 & 615.39 & 99.07\% \\ 
 & & 128 & 32.72 & 276.15 & 33.25 & 1.97 & 345.99 & 88.11\% \\ 
 & & 256 & 23.73 & 197.55 & 17.26 & 1.41 & 241.43 & 63.13\% \\ 
 & & 512 & 19.18 & 140.59 & 10.48 & 0.99 & 172.41 & 44.2\% \\ 
 \midrule
F16 (512) & 1e-8 & 32 & 195.51 & 1769.14 & 290.37 & 7.04 & 2271.94 & 100.0\% \\ 
 & & 64 & 118.07 & 944.96 & 112.72 & 4.25 & 1185.44 & 95.83\% \\ 
 & & 128 & 76.21 & 585.09 & 50.34 & 2.76 & 717.81 & 79.13\% \\ 
 & & 256 & 57.51 & 435.65 & 31.25 & 2.16 & 529.37 & 53.65\% \\ 
 & & 512 & 46.1 & 314.68 & 18.73 & 1.41 & 383.06 & 37.07\% \\ 
 \midrule
Submarine (512) & 1e-4 & 32 & 23.42 & 349.93 & 71.66 & 4.07 & 451.51 & 100.0\% \\ 
 & & 64 & 17.18 & 226.32 & 40.21 & 2.65 & 288.43 & 78.27\% \\ 
 & & 128 & 10.2 & 104.03 & 12.36 & 1.43 & 128.99 & 87.51\% \\ 
 & & 256 & 9.01 & 82.88 & 8.36 & 1.07 & 102.14 & 55.26\% \\ 
 \midrule
Submarine (512) & 1e-6 & 32 & 57.6 & 667.59 & 128.79 & 5.35 & 863.21 & 100.0\% \\ 
 & & 64 & 42.15 & 475.55 & 65.86 & 3.35 & 589.66 & 73.2\% \\ 
 & & 128 & 26.37 & 245.83 & 24.66 & 1.91 & 300.38 & 71.84\% \\ 
 & & 256 & 23.56 & 201.56 & 15.27 & 1.51 & 243.27 & 44.35\% \\ 
 \midrule
Submarine (512) & 1e-8 & 32 & 136.74 & 1281.87 & 172.27 & 8.09 & 1606.38 & 100.0\% \\ 
 & & 64 & 101.14 & 948.74 & 94.44 & 4.7 & 1154.14 & 69.59\% \\ 
 & & 128 & 61.57 & 523.28 & 37.73 & 2.66 & 628.12 & 63.94\% \\ 
 & & 256 & 55.03 & 447.74 & 25.8 & 2.01 & 532.93 & 37.68\% \\ 
\bottomrule
\end{tabular}
\label{tab:breakdowns}
\end{table}

Table~\ref{tab:breakdowns} provides a breakdown of the running times of the different components of the algorithm.
We see the following trends in the data:
\listspace\begin{itemize}
\listsep

\item
The HF-M2L and HF-L2L (the downwards pass in the high-frequency regime) constitute the bulk of the running time.
M2L is typically the most expensive translation operator~\cite{Ying:2003}, and the interaction lists are larger in the high-frequency regime than in the low-frequency regime.

\item
Communication time is small.
The high-frequency and low-frequency communication combined are about 1\% of the running time.
For the number of processes in these experiments, this problem is expected to be compute bound.
This agrees with studies on communication times in KI-FMM~\cite{Chandramowlishwaran:2012}.

\item
The more accurate solutions do not scale as well as less accurate solutions.
Higher accuracy solutions are more memory constrained, and we speculate that lower accuracy solutions benefit relatively more from cache effects when the number of cores increases.

\end{itemize}\listspace

The data show super-linear speedups for the sphere and F16 geometries at some accuracies.
This is partly caused by the sub-optimality of the partitioning algorithm (Algorithm~\ref{alg:comp_partition}).
However, the speedups are still encouraging.

\section{Conclusion and future work}

We have presented the first parallel algorithm for the directional
multilevel approach for $N$-body problems with the Helmholtz kernel.
Under proper load balancing, the computational complexity of the
algorithm is $\bigoh{N \log N / p}$, and the communication time is
$\bigoh{p \alpha + \frac{1}{p}N \log N \beta}$.  Communication time is
minimal in our experiments.  The number of processes is limited to
$\bigoh{\sqrt{N}}$, so the running time can essentially be reduced to
$\bigoh{\sqrt{N}\log N}$.  The algorithm exhibits good strong scaling
for a variety of geometries and target accuracies.

In future work, we plan to address the following issues:
\begin{itemize}
\item Partitioning the computation at a fixed level in the octree avoids communication at high levels in the octree but limits the parallelism.
In future work, we plan to divide the work at lower levels of the octree and introduce more communication.
Since communication is such a small part of the running time in our algorithm, it is reasonable to expect further improvements with these changes.

\item Our algorithm uses $\bigoh{w^2}$ wedges for boxes of width $w$.
This makes pre-computation expensive.
We would like to reduce the total number of wedges at each box width.


\item Our implementation uses a simple octree construction.  We would
  like to incorporate the more sophisticated approaches of software
  such as \texttt{p4est}~\cite{Burstedde:2011} and
  \texttt{Dendro}~\cite{Sampath:2008} in order to improve our parallel
  octree manipulations.  This would alleviate memory constraints, since
  octree information is currently redundantly stored on each process.
\end{itemize}

\section*{Acknowledgments}
This work was partially supported by National Science Foundation under
award DMS-0846501 and by the Mathematical Multifaceted Integrated
Capability Centers (MMICCs) effort within the Applied Mathematics
activity of the U.S. Department of Energy's Advanced Scientific
Computing Research program, under Award Number(s) DE-SC0009409.
Austin R. Benson is supported by an Office of Technology Licensing Stanford Graduate Fellowship.
  
\bibliographystyle{abbrv}
\bibliography{main-bibliography}

\begin{thebibliography}{10}

\bibitem{Ballard:2011}
G.~Ballard, J.~Demmel, O.~Holtz, and O.~Schwartz.
\newblock Minimizing communication in numerical linear algebra.
\newblock {\em SIAM Journal on Matrix Analysis and Applications},
  32(3):866--901, 2011.

\bibitem{Bleszynski:1996}
E.~Bleszynski, M.~Bleszynski, and T.~Jaroszewicz.
\newblock {AIM}: {A}daptive integral method for solving large-scale
  electromagnetic scattering and radiation problems.
\newblock {\em Radio Science}, 31(5):1225--1251, 1996.

\bibitem{Bojarski:1972}
N.~N. Bojarski.
\newblock K-space formulation of the electromagnetic scattering problem.
\newblock Technical report, DTIC Document, 1972.

\bibitem{Bradie:1993}
B.~Bradie, R.~Coifman, and A.~Grossmann.
\newblock Fast numerical computations of oscillatory integrals related to
  acoustic scattering, {I}.
\newblock {\em Applied and Computational Harmonic Analysis}, 1(1):94--99, 1993.

\bibitem{Bruck:1997}
J.~Bruck, C.-T. Ho, S.~Kipnis, E.~Upfal, and D.~Weathersby.
\newblock Efficient algorithms for all-to-all communications in multiport
  message-passing systems.
\newblock {\em Parallel and Distributed Systems, IEEE Transactions on},
  8(11):1143--1156, 1997.

\bibitem{Bruno:2001}
O.~P. Bruno and L.~A. Kunyansky.
\newblock A fast, high-order algorithm for the solution of surface scattering
  problems: basic implementation, tests, and applications.
\newblock {\em Journal of Computational Physics}, 169(1):80--110, 2001.

\bibitem{Burstedde:2011}
C.~Burstedde, L.~C. Wilcox, and O.~Ghattas.
\newblock {\texttt{p4est}}: Scalable algorithms for parallel adaptive mesh
  refinement on forests of octrees.
\newblock {\em SIAM Journal on Scientific Computing}, 33(3):1103--1133, 2011.

\bibitem{Canning:1992}
F.~X. Canning.
\newblock Sparse approximation for solving integral equations with oscillatory
  kernels.
\newblock {\em SIAM Journal on Scientific and Statistical Computing},
  13(1):71--87, 1992.

\bibitem{Cecka:2013}
C.~Cecka and E.~Darve.
\newblock Fourier-based fast multipole method for the {H}elmholtz equation.
\newblock {\em SIAM Journal on Scientific Computing}, 35(1):A79--A103, 2013.

\bibitem{Chan:2007}
E.~Chan, M.~Heimlich, A.~Purkayastha, and R.~Van De~Geijn.
\newblock Collective communication: theory, practice, and experience.
\newblock {\em Concurrency and Computation: Practice and Experience},
  19(13):1749--1783, 2007.

\bibitem{Chandramowlishwaran:2012}
A.~Chandramowlishwaran, J.~Choi, K.~Madduri, and R.~Vuduc.
\newblock Brief announcement: towards a communication optimal fast multipole
  method and its implications at exascale.
\newblock In {\em Proceedinbgs of the 24th ACM symposium on Parallelism in
  algorithms and architectures}, pages 182--184. ACM, 2012.

\bibitem{Chandramowlishwaran:2010}
A.~Chandramowlishwaran, S.~Williams, L.~Oliker, I.~Lashuk, G.~Biros, and
  R.~Vuduc.
\newblock Optimizing and tuning the fast multipole method for state-of-the-art
  multicore architectures.
\newblock In {\em Parallel \& Distributed Processing (IPDPS), 2010 IEEE
  International Symposium on}, pages 1--12. IEEE, 2010.

\bibitem{Cheng:2006}
H.~Cheng, W.~Y. Crutchfield, Z.~Gimbutas, L.~F. Greengard, J.~F. Ethridge,
  J.~Huang, V.~Rokhlin, N.~Yarvin, and J.~Zhao.
\newblock A wideband fast multipole method for the {H}elmholtz equation in
  three dimensions.
\newblock {\em Journal of Computational Physics}, 216(1):300--325, 2006.

\bibitem{Cheng:1999}
H.~Cheng, L.~Greengard, and V.~Rokhlin.
\newblock A fast adaptive multipole algorithm in three dimensions.
\newblock {\em Journal of Computational Physics}, 155(2):468--498, 1999.

\bibitem{Chew:2001}
W.~C. Chew, E.~Michielssen, J.~Song, and J.~Jin.
\newblock {\em Fast and efficient algorithms in computational
  electromagnetics}.
\newblock Artech House, Inc., 2001.

\bibitem{Colton:2013}
D.~L. Colton and R.~Kress.
\newblock {\em Inverse acoustic and electromagnetic scattering theory},
  volume~93.
\newblock Springer, 2013.

\bibitem{Darve:2011}
E.~Darve, C.~Cecka, and T.~Takahashi.
\newblock The fast multipole method on parallel clusters, multicore processors,
  and graphics processing units.
\newblock {\em Comptes Rendus M\`{e}canique}, 339(2–3):185 -- 193, 2011.
\newblock High Performance Computing.

\bibitem{Engquist:2007}
B.~Engquist and L.~Ying.
\newblock Fast directional multilevel algorithms for oscillatory kernels.
\newblock {\em SIAM Journal on Scientific Computing}, 29(4):1710--1737, 2007.

\bibitem{Engquist:2009}
B.~Engquist and L.~Ying.
\newblock A fast directional algorithm for high frequency acoustic scattering
  in two dimensions.
\newblock {\em Communications in Mathematical Sciences}, 7(2):327--345, 2009.

\bibitem{Engquist:2010}
B.~Engquist and L.~Ying.
\newblock Fast directional algorithms for the {H}elmholtz kernel.
\newblock {\em J. Comput. Appl. Math.}, 234(6):1851--1859, July 2010.

\bibitem{Ergul:2008}
O.~Ergul and L.~Gurel.
\newblock Efficient parallelization of the multilevel fast multipole algorithm
  for the solution of large-scale scattering problems.
\newblock {\em Antennas and Propagation, IEEE Transactions on},
  56(8):2335--2345, 2008.

\bibitem{Ergul:2009}
O.~Ergul and L.~Gurel.
\newblock A hierarchical partitioning strategy for an efficient parallelization
  of the multilevel fast multipole algorithm.
\newblock {\em Antennas and Propagation, IEEE Transactions on},
  57(6):1740--1750, 2009.

\bibitem{Greengard:1990}
L.~Greengard and W.~Gropp.
\newblock A parallel version of the fast multipole method.
\newblock {\em Computers \& Mathematics with Applications}, 20(7):63 -- 71,
  1990.

\bibitem{Harris:2001}
J.~G. Harris.
\newblock {\em Linear elastic waves}, volume~26.
\newblock Cambridge University Press, 2001.

\bibitem{Macqueen:1967}
J.~MacQueen et~al.
\newblock Some methods for classification and analysis of multivariate
  observations.
\newblock In {\em Proceedings of the fifth Berkeley symposium on mathematical
  statistics and probability}, volume~1, page~14. California, USA, 1967.

\bibitem{Rahimian:2010}
A.~Rahimian, I.~Lashuk, S.~Veerapaneni, A.~Chandramowlishwaran, D.~Malhotra,
  L.~Moon, R.~Sampath, A.~Shringarpure, J.~Vetter, R.~Vuduc, et~al.
\newblock Petascale direct numerical simulation of blood flow on 200k cores and
  heterogeneous architectures.
\newblock In {\em Proceedings of the 2010 ACM/IEEE International Conference for
  High Performance Computing, Networking, Storage and Analysis}, pages 1--11.
  IEEE Computer Society, 2010.

\bibitem{Rokhlin:1992}
V.~Rokhlin.
\newblock Diagonal forms of translation operators for {H}elmholtz equation in
  three dimensions.
\newblock Technical report, DTIC Document, 1992.

\bibitem{Sampath:2008}
R.~S. Sampath, S.~S. Adavani, H.~Sundar, I.~Lashuk, and G.~Biros.
\newblock Dendro: parallel algorithms for multigrid and {AMR} methods on {2:1}
  balanced octrees.
\newblock In {\em Proceedings of the 2008 ACM/IEEE conference on
  Supercomputing}, page~18. IEEE Press, 2008.

\bibitem{Song:1995}
J.~Song and W.~C. Chew.
\newblock Multilevel fast-multipole algorithm for solving combined field
  integral equations of electromagnetic scattering.
\newblock {\em Microwave and Optical Technology Letters}, 10(1):14--19, 1995.

\bibitem{Song:1997}
J.~Song, C.-C. Lu, and W.~C. Chew.
\newblock Multilevel fast multipole algorithm for electromagnetic scattering by
  large complex objects.
\newblock {\em Antennas and Propagation, IEEE Transactions on},
  45(10):1488--1493, 1997.

\bibitem{Taboada:2010}
J.~M. Taboada, M.~G. Araujo, J.~M. Bertolo, L.~Landesa, F.~Obelleiro, and J.~L.
  Rodriguez.
\newblock {MLFMA-FFT} parallel algorithm for the solution of large-scale
  problems in electromagnetics.
\newblock {\em Progress In Electromagnetics Research}, 105:15--30, 2010.

\bibitem{Thakur:2005}
R.~Thakur, R.~Rabenseifner, and W.~Gropp.
\newblock Optimization of collective communication operations in {MPICH}.
\newblock {\em International Journal of High Performance Computing
  Applications}, 19(1):49--66, 2005.

\bibitem{Tran:2012}
K.~D. Tran.
\newblock {\em Fast numerical methods for high frequency wave scattering}.
\newblock PhD thesis, University of Texas at Austin, 2012.

\bibitem{Tsuji:2011}
P.~Tsuji and L.~Ying.
\newblock A fast directional algorithm for high-frequency electromagnetic
  scattering.
\newblock {\em Journal of Computational Physics}, 230(14):5471--5487, 2011.

\bibitem{Waltz:2007}
C.~Waltz, K.~Sertel, M.~A. Carr, B.~C. Usner, and J.~L. Volakis.
\newblock Massively parallel fast multipole method solutions of large
  electromagnetic scattering problems.
\newblock {\em Antennas and Propagation, IEEE Transactions on},
  55(6):1810--1816, 2007.

\bibitem{Ying:2009}
L.~Ying.
\newblock Fast algorithms for boundary integral equations.
\newblock In {\em Multiscale Modeling and Simulation in Science}, pages
  139--193. Springer, 2009.

\bibitem{Ying:2004}
L.~Ying, G.~Biros, and D.~Zorin.
\newblock A kernel-independent adaptive fast multipole algorithm in two and
  three dimensions.
\newblock {\em Journal of Computational Physics}, 196(2):591--626, 2004.

\bibitem{Ying:2003}
L.~Ying, G.~Biros, D.~Zorin, and H.~Langston.
\newblock A new parallel kernel-independent fast multipole method.
\newblock In {\em Supercomputing, 2003 ACM/IEEE Conference}, pages 14--14.
  IEEE, 2003.

\bibitem{Yokota:2012a}
R.~Yokota and L.~Barba.
\newblock Hierarchical n-body simulations with autotuning for heterogeneous
  systems.
\newblock {\em Computing in Science \& Engineering}, 14(3):30--39, 2012.

\bibitem{Yokota:2012b}
R.~Yokota and L.~A. Barba.
\newblock A tuned and scalable fast multipole method as a preeminent algorithm
  for exascale systems.
\newblock {\em International Journal of High Performance Computing
  Applications}, 26(4):337--346, 2012.

\end{thebibliography}

\end{document}